\newtheorem{corollary}{Corollary}[section]
\newtheorem{definition}[corollary]{Definition}
\newtheorem{proposition}[corollary]{Proposition}
\newtheorem{theorem}[corollary]{Theorem}
\numberwithin{equation}{section}
\renewcommand{\leq}{\leqslant}
\renewcommand{\geq}{\geqslant}
\def\section{\@startsection {section}{1}{\z@}{-3.5ex plus -1ex minus
    -.2ex}{2.3ex plus .2ex}{\large\bf}}
\begin{document}

\title{\bf  Input-to-state type Stability for Simplified Fluid-Particle Interaction System }
\author[1]{Zhuo Xu}
\affil[1]{Institut de Math\'{e}matiques de Bordeaux UMR 5251, Universit\'{e} de Bordeaux/Bordeaux INP/CNRS, 351 Cours de la Lib\'{e}ration, 33 405 TALENCE, France}
\date{}

\maketitle

\begin{abstract}
\vskip 0.2in
\par In this paper, we study the well-posedness and the input-to-state type stability of a one-dimensional fluid-particle interaction system. A distinctive feature, not yet considered in the ISS literature,  is that our system involves a free boundary. More precisely, the fluid is described by the viscous Burgers equation and the motion of particle obeys Newton’s second law. The point mass is subject to both a feedback control and an open-loop control. We first establish the well-posedness of the system for any open-loop input in \( L^2(0,\infty) \). Then, assuming the input also belongs to \( L^1(0,\infty) \), we prove that the particle's position remains uniformly bounded and that the system is input-to-state type stable. The proof is based on the construction of a Lyapunov functional derived from a special test function.

\end{abstract}

\textbf{Key words:}
 Burgers equation; fluid-solid interaction system; finite energy solution; well-posedness; input-to-state type stability.

\textbf{AMS subject classification:}  35Q35, 35R35, 74F10,  93D09

\tableofcontents
 \section{Introduction}

In recent years, the concept of input-to-state stability (ISS) has attracted a lot of attention in both mathematics and engineering. It was first introduced by Sontag \cite{sontag1989smooth}, providing a framework to analyze how external inputs influence the internal state of a system, particularly those described by ordinary differential equations. A major breakthrough of the ISS theory is the equivalence between ISS and the existence of a smooth ISS Lyapunov function by Sontag and Wang \cite{sontag1995characterizations}.

The ISS framework has been extended to infinite-dimensional systems, as can be seen in recent works developing appropriate functional analytic tools and Lyapunov-based characterizations; see Jacob, Mironchenko and Schwenninger \cite{jacob2022input}, Jacob, Nabiullin, Partington and Schwenninger \cite{jacob2018infinite}, Jacob, Nabiullin,  Partington, and Schwenninger \cite{jacob2016input}.  These advances have enabled the study of abstract evolution systems, boundary control systems, and distributed parameter systems in a unified way. More recently, ISS theory has been extended to nonlinear systems, Jacob, Tucsnak, Hosfeld and Schwenninger \cite{tucsnak2023input}, they show the input to state stable estimate to bilinear feedback system under small initial condition and they apply the result to some nonlinear systems.

The study of ISS property for nonlinear partial differential equations also attract a lot of attention. Karafyllis and Krstic \cite{karafyllis2016input}, they focused on parabolic PDEs with boundary disturbances and established the iss property for the nonlinear system. Later, they extensions to broader classes of 1-D parabolic PDEs presented in Karafyllis and Krstic \cite{karafyllis2016iss, karafyllis2017iss}. Further developments include the use of monotonicity methods to address nonlinearities and boundary inputs Mironchenko, Karafyllis and Krstic \cite{mironchenko2019monotonicity}, and the derivation of ISS estimates in spatial sup-norms for nonlinear parabolic systems Karafyllis and Krstic \cite{karafyllis2021iss}.

In this paper, we study the ISS type properties of an one dimension simplified fluid–particle interaction nonlinear system. This work continues the study by Tucsnak and Xu \cite{xu5114347global}, in which the authors addressed the global well-posedness and exponential stabilization of the closed-loop system. Our aim in this paper is on the open-loop control system, which means we add another open loop control for the feedback system. We aim to analyze the robustness of the system with respect to such external perturbations.

The fluid–particle interaction model under consideration was first introduced in V{\'a}zquez and Zuazua \cite{vazquez2003large, zuazua2006}, where the existence, uniqueness, and large-time behavior of solutions were studied in the case where the fluid occupies the entire space. Later, Doubova and Fernández-Cara \cite{doubova2005some} investigated the boundary controllability of this system, proving local null controllability for the nonlinear system when controls are applied at both ends of the spatial domain. The single boundary control problem was studied by Liu, Takahashi and Tucsnak \cite{liu2013single}, who employed spectral analysis and introduced the so-called \emph{source point method}.

In 2015, C{\^i}ndea, Micu, Roven{\c{t}}a and Tucsnak \cite{cindea2015particle} analyzed the model where the control acts directly on the particle. Their study focused on a closed-loop system with initial data close to equilibrium, and they established local null controllability using a combination of spectral and fixed-point techniques.

More precisely, we consider the following system:
\begin{align}
\label{conversition_of_mom_of_solid_without_control}	&\dot{v}(t,y) + v(t,y) v_y(t,y)- v_{yy}(t,y)  = 0 & t &\in (0,T), \quad y \in (-1,1), \quad y \neq h(t), \\
	\label{boundary_condition_of_fluid_without_control}	&v(t,-1) = v(t,1) = 0 & t &\in (0,T), \\
	\label{campability_condition_without_control}&\dot{h}(t) = v(t,h(t)) & t &\in (0,T), \\
	\label{newton_second_law_without_control}&\ddot{h}(t) = [v_y](t,h(t))+\mathcal{K}(h_1-h(t))
	+u(t)  & t &\in (0,T), \\
	\label{inlitial_date_of_veloctiy_without_control}&v(0,y) = v_0(y) & y &\in (-1,1),\\
	\label{inlitial_date_of_solid_without_control}&h(0)=h_0,\qquad \qquad \dot{h}(0)=g_0.
\end{align}
where $\mathcal{K}\geq 0$ is a  given constant and
\begin{equation}\label{definition_of_jump}
\left[v_y\right](t,h(t))=v_y\left(t,h(t)^+\right)-v_y\left(t,h(t)^-\right)\qquad \qquad (t>0).
\end{equation}

Here, $v = v(t, y)$ represents the velocity field of a viscous fluid, governed by the Burgers equation. The motion of a particle immersed in this fluid is described by Newton’s second law as given in \eqref{newton_second_law_without_control}. The position of the particle is denoted by $h(t)$ and its velocity by $\dot{h}(t)$, both depending only on time. The external input force $u(t)$ also depends solely on time and $h_1\in (-1,1)$ is our target point of particle.

 System \eqref{conversition_of_mom_of_solid_without_control}–\eqref{inlitial_date_of_solid_without_control} constitutes a free boundary value problem because the particle's position $h(t)$, which influences the domain of the fluid, is an unknown of the system. This fact leads us to adopt the notion of a finite energy solution, as defined in \cite[Definition 1.1]{cindea2015particle}:

\begin{definition}\label{definition_of_finite_energy}
	Given $T>0$, $v_0\in L^2(-1,1)$, $g_0\in \mathbb{R}$, $h_0,\,h_1\in (-1,1)$, $\mathcal{K}\geq 0$ and $u\in L^2(0,T)$, we say that
	\begin{equation}
		\left[\begin{gathered}
			v\\
			g\\
			h
		\end{gathered}\right]\in \left[ C\left([0,T];\;L^2(-1,1)\right)\cap  L^2\left((0,T);\;H^1_0(-1,1)\right)\right]\times H^1(0,T)\times L^2(0,T),
	\end{equation}
	is a finite energy solution of system \eqref{conversition_of_mom_of_solid_without_control}–\eqref{inlitial_date_of_solid_without_control} on $[0,T]$ if $h(0)=h_0\in (-1,1)$, $\dot{h}(t)=g(t)=v\left(t,h(t)\right)$, and $h(t)\in (-1,1)$ for almost every $t\in [0,T)$, and if for all $t \in [0,T)$ the following equation holds:
    \begin{align}
		&\int_{-1}^{1}v(t,y)\psi(t,y)\,{\rm d}y-\int_{-1}^{1}v_0(y)\psi(0,y)\,{\rm d}y+g(t)l(t)-g_0l(0)-\int_{0}^{t}g(\sigma)\dot{l}(\sigma)\,{\rm d}\sigma\notag\\
		&-\int_{0}^{t}\int_{-1}^{1}v(\sigma,y)\dot{\psi} (\sigma, y)\,{\rm d}y\,{\rm d}\sigma
		+\int_{0}^{t}\int_{-1}^{1} v_y(\sigma, y)\psi_y(\sigma,y)\,{\rm d}y\,{\rm d}\sigma\notag\\
		&-\frac12\int_{0}^{t}\int_{-1}^{1}v^2(\sigma ,y)\psi_y(\sigma,y)\,{\rm d}y\,{\rm d}\sigma
		=\int_{0}^{t}u(\sigma)l(\sigma)\,{\rm d}\sigma+\int_{0}^{t}\mathcal{K}\left(h_1-h(\sigma)\right)l(\sigma)\,{\rm d}\sigma,
	\end{align}
	for every test function
	\begin{align}
		&\left[\begin{gathered}
			\psi\\
			l
		\end{gathered}\right]\in \left\{ H^1\left((0,T);\;L^2(-1,1)\right)\cap  L^2\left((0,T);\;H^1_0(-1,1)\right)\right\}\times H^1(0,T),\\
		&l(t)=\psi(t,l(t)).
	\end{align}
    A triple $\begin{bmatrix}
			v\\
			g\\
			h
		\end{bmatrix}$ which is a finite energy solution on $[0,T]$ for every $T>0$ is called a global finite energy solution.
\end{definition}

Note that the test functions depend explicitly on the solution, through its component $h(t)$. Throughout the paper, for $m\in \mathbb{N}$, we denote by $H^m$ and $H^m_0$ the usual Sobolev spaces, and by $H^{-m}$ the dual of $H^m_0$ with respect to the pivot space $L^2$.

We now present the main results of the paper. The first theorem ensures the global existence and uniqueness result of system \eqref{conversition_of_mom_of_solid_without_control}-\eqref{inlitial_date_of_solid_without_control}:

\begin{theorem}\label{global_exists}
Let $v_0\in L^2(-1,1)$, $g_0\in\mathbb{R}$, $h_0,\, h_1\in(-1,1)$, $\mathcal{K}\geq 0$ and $u\in L^2(0, \infty)$, then
system \eqref{conversition_of_mom_of_solid_without_control}–\eqref{inlitial_date_of_solid_without_control} admits a unique global finite energy solution as defined above. Moreover, for every $t\geq 0$ the following energy equality holds:
	\begin{align}\label{energy_equality}
		&\int_{-1}^{1}v^2(t,y)\,{\rm d}y-\int_{-1}^{1}v^2_0(y)\,{\rm d}y+g^2(t)-g_0^2+\mathcal{K}\left(h_1-h(t)\right)^2-\mathcal{K}\left(h_1-h_0\right)^2\notag\\
		&+2\int_{0}^{t}\int_{-1}^{1}v_y^2(\sigma ,y)\,{\rm d}y\,{\rm d}\sigma
		=\int_{0}^{t}u(\sigma)g(\sigma)\,{\rm d}\sigma.
	\end{align}
\end{theorem}

The second result implies the boundedness of the particle trajectory under additional assumptions on the input $u$.

\begin{theorem}\label{unforimboundtheorem}
With the same initial conditions as in Theorem~\ref{global_exists}, if the open control $u\in L^2\cap L^1(0, \infty)$, then system \eqref{conversition_of_mom_of_solid_without_control}–\eqref{inlitial_date_of_solid_without_control} admits a unique global finite energy solution. Moreover, there exists $\alpha > 0$, depending on $\|v_0\|_{L^2(-1,1)}$, $g_0$, $h_0$, $h_1$, $\mathcal{K}$, $\|u\|_{L^2(0,\infty)}$, and $\|u\|_{L^1(0,\infty)}$, such that
\begin{align}\label{estimateh1}
	-1+\alpha \leq h(t) \leq 1-\alpha.
\end{align}
\end{theorem}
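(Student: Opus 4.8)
The plan is to bound $h(t)$ away from $\pm 1$ by controlling the distance from the particle to the boundary via a weighted energy estimate. First I would note that existence and uniqueness of the global finite energy solution is already granted by Theorem~\ref{global_exists}, since $L^2\cap L^1(0,\infty)\subset L^2(0,\infty)$; so the whole task is the uniform bound \eqref{estimateh1}. The starting point is the energy equality \eqref{energy_equality}: using $|\int_0^t u g\,{\rm d}\sigma|\le \tfrac12\int_0^t g^2\,{\rm d}\sigma + \tfrac12\int_0^t u^2\,{\rm d}\sigma$ is too lossy because we have no a priori $L^2$-in-time bound on $g$; instead I would bound $|g(\sigma)|\le \|v(\sigma,\cdot)\|_{H^1_0}$ (since $g=v(\cdot,h(\cdot))$ and $v(\cdot,\pm1)=0$, so $|v(t,y)|\le \|v_y(t,\cdot)\|_{L^2}$ by Cauchy--Schwarz), giving $\int_0^t u g\le \tfrac14\int_0^t u^2 + \int_0^t \|v_y\|_{L^2}^2\, \cdot\,(\text{something})$... actually the cleaner route is $|u(\sigma)g(\sigma)|\le |u(\sigma)|\,\|v_y(\sigma,\cdot)\|_{L^2}$ and then $\int_0^t |u|\,\|v_y\|_{L^2} \le \varepsilon\int_0^t\|v_y\|_{L^2}^2 + \tfrac{1}{4\varepsilon}\int_0^t u^2$; absorbing the first term into the dissipation term $2\int_0^t\|v_y\|_{L^2}^2$ on the left with $\varepsilon$ small yields a uniform-in-$t$ bound
\[
\|v(t,\cdot)\|_{L^2}^2 + g^2(t) + \mathcal{K}(h_1-h(t))^2 + \int_0^t\|v_y\|_{L^2}^2\,{\rm d}\sigma \le C_0,
\]
with $C_0$ depending only on the listed data. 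This already gives a uniform bound on $\|v_y\|_{L^2((0,\infty);L^2)}$ and hence, via $|g|\le\|v_y\|_{L^2}$, that $g\in L^2(0,\infty)$ with controlled norm.

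The second and crucial step is to convert control of $v$ into control of $h$ staying inside $(-1,1)$. Here is where the ``special test function'' advertised in the abstract enters. The idea is to test the weak formulation (or equivalently run a formal computation, then justify it by the test-function machinery) against a function $\psi(t,y)$ chosen so that $l(t)=\psi(t,h(t))$ produces, after integration, a quantity that blows up precisely as $h(t)\to\pm1$. A natural candidate is built from $\phi(y)=\log\!\frac{1+y}{1-y}$ or a truncation thereof: one wants $\frac{d}{dt}\big(\text{energy-like quantity involving }\phi(h(t))\big)$ to be integrable in time. Concretely I would look for an identity of the form
\[
\Phi(h(t)) + (\text{nonnegative terms}) = \Phi(h_0) + \int_0^t (\text{terms controlled by } \|v_y\|_{L^2}^2,\ |u|,\ \text{and } C_0)\,{\rm d}\sigma,
\]
where $\Phi(h)\to+\infty$ as $h\to\pm1$, and where the right-hand side is bounded uniformly in $t$ thanks to Step~1 (the $\|v_y\|_{L^2((0,\infty);L^2)}$ bound, the $L^1$ bound on $u$, and Cauchy--Schwarz). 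Since $\Phi(h(t))$ stays bounded and $\Phi$ is a proper function on $(-1,1)$, this forces $h(t)$ to remain in a compact subinterval $[-1+\alpha, 1-\alpha]$.

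The main obstacle I anticipate is the rigorous handling of the boundary/particle coupling in this weighted estimate: the test function $\psi$ must satisfy $\psi(t,\pm1)=0$ (to be admissible, since the space requires $\psi(t,\cdot)\in H^1_0(-1,1)$) yet also be large near the moving point $h(t)$, and the jump term $[v_y](t,h(t))$ and the convection term $\frac12\int v^2\psi_y$ will generate contributions that must be shown to be either of favorable sign or absorbable. In particular $\psi_y$ will be large near $y=\pm1$, so the term $\frac12\int_0^t\int_{-1}^1 v^2\psi_y$ needs care — one will likely need the pointwise bound $|v(t,y)|^2\le \|v_y(t,\cdot)\|_{L^2}^2\cdot$(distance-type weight) together with the fact that $v$ vanishes at $\pm1$ to tame it, and possibly a Hardy-type inequality. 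A further subtlety is that $\psi$ depends on $h(t)$ hence on the solution, so to invoke Definition~\ref{definition_of_finite_energy} legitimately one must check $\psi\in H^1((0,T);L^2(-1,1))\cap L^2((0,T);H^1_0(-1,1))$ and the compatibility $l(t)=\psi(t,h(t))$; the regularity $h\in H^1(0,T)$ with $\dot h=g\in L^2$ should suffice, but this verification is the technical heart of the argument. Once the weighted identity is in place, extracting $\alpha$ and tracking its dependence on the data is routine.
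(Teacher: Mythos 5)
Your overall strategy --- deriving a uniform-in-time bound on a quantity that blows up as $h\to\pm1$ by testing the weak formulation with a carefully chosen test function --- is indeed the paper's strategy, and your Step 1 is essentially the energy estimate \eqref{energy_equality1}, which the paper takes as given. However, there are two genuine gaps in your proposal.

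First, and most seriously, your claim that the right-hand side of the weighted identity is ``bounded uniformly in $t$ thanks to Step 1, the $L^1$ bound on $u$, and Cauchy--Schwarz'' overlooks the feedback term. Testing with $l\equiv 1$ produces the term $\int_0^t \mathcal{K}\bigl(h_1-h(\sigma)\bigr)\,{\rm d}\sigma$ on the right-hand side of \eqref{equality_of_h}, and the energy estimate only gives the pointwise bound $|h_1-h(\sigma)|\le 2$, so this integral can a priori grow linearly in $t$; it is controlled neither by $\|v_y\|^2_{L^2((0,\infty);L^2(-1,1))}$ nor by $\|u\|_{L^1(0,\infty)}$. This is precisely the obstruction that makes the bounds $c_1(t),c_2(t)$ of Proposition~\ref{theorem_global} degenerate as $t\to\infty$. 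The paper closes this gap with a sign argument: to keep $h$ away from $1$ it restarts the identity at the last time $T^*$ at which $h(T^*)=h_1$ (with a case analysis over the set of crossing times), so that on the relevant interval $h(\sigma)>h_1$, hence $\mathcal{K}(h_1-h(\sigma))\le 0$ and the term can simply be discarded from the upper bound. Your proposal contains no substitute for this step, and without it the argument does not close for $\mathcal{K}>0$.

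Second, your candidate weight $\phi(y)=\log\frac{1+y}{1-y}$ is not an admissible test function, since $\phi\notin H^1_0(-1,1)$ (indeed $\phi_y=2/(1-y^2)\notin L^2(-1,1)$), and you flag but do not resolve the resulting difficulties with the convection term near $y=\pm1$. The paper avoids all of this: it takes the bounded piecewise-linear hat function $\varphi$ of \eqref{testing_function} with $l\equiv 1$, and the logarithm appears not from the shape of the test function but from the viscous term, via
\begin{equation*}
\int_{-1}^{1} v_y(\sigma,y)\varphi_y(\sigma,y)\,{\rm d}y=\frac{\dot h(\sigma)}{1+h(\sigma)}+\frac{\dot h(\sigma)}{1-h(\sigma)}=\frac{{\rm d}}{{\rm d}\sigma}\Bigl(\ln\bigl(1+h(\sigma)\bigr)-\ln\bigl(1-h(\sigma)\bigr)\Bigr),
\end{equation*}
as in \eqref{caculate_v_xx}. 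Since $|\varphi|\le 1$, the remaining terms ($P$, $A_1$, $A_2$) are bounded by the dissipation without any Hardy inequality or truncation. I would recommend adopting that identity and then supplying the missing sign and case analysis for the feedback term.
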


Before stating the main result of the ISS type, we briefly mention that a formal discussion of the ISS will be provided in Section~\ref{background_of_input}. We now present the main stability theorem of system \eqref{conversition_of_mom_of_solid_without_control}-\eqref{inlitial_date_of_solid_without_control}:

\begin{theorem}\label{theoreminputtostate}
With the notation and assumptions of Theorem \ref{unforimboundtheorem}, system \eqref{conversition_of_mom_of_solid_without_control}–\eqref{inlitial_date_of_solid_without_control} exhibits the following behavior:
\begin{itemize}
	\item If $\mathcal{K}=0$, then there exists a constant $\eta_1 > 0$ such that
	\begin{align}\label{exp_velocity}
		\Vert v(t,\cdot)\Vert^2_{L^2(-1,1)}+g^2(t)
		\leq 16\exp\left(-\frac14 t\right)\left(	\Vert v_0\Vert^2_{L^2(-1,1)}+g^2_0\right)+16 \Vert u\Vert_{L^2(0,\infty)}\qquad (t>0).
	\end{align}
	
	\item If $\mathcal{K}>0$, then there exist constants $\eta > 0$, which are both depending on $\Vert v_0 \Vert_{L^2(-1,1)}$, $g_0$, $h_0$, $h_1$, $\Vert u\Vert_{L^2(0,\infty)}$, $\Vert u\Vert_{L^1(0,\infty)}$, and $\mathcal{K}$, such that
	\begin{align}\label{input_states_stability1}
	&\Vert v(t,\cdot)\Vert^2_{L^2(-1,1)}+g^2(t)+\mathcal{K}(h(t)-h_1)^2\notag\\
	\leq& 16\exp\left(-\eta t\right)\left(	\Vert v_0\Vert^2_{L^2(-1,1)}+g^2_0+\mathcal{K}(h_0-h_1)^2\right)+ \frac32\Vert u\Vert_{L^2(0,\infty)}\qquad (t>0).
	\end{align}
\end{itemize}
\end{theorem}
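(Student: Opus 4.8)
The plan is to derive the stability estimates from the energy equality \eqref{energy_equality} of Theorem~\ref{global_exists} combined with the uniform bound \eqref{estimateh1} of Theorem~\ref{unforimboundtheorem}, by constructing a suitable Lyapunov functional. The natural candidate for the energy is
\[
E(t) = \frac12\left(\Vert v(t,\cdot)\Vert^2_{L^2(-1,1)} + g^2(t) + \mathcal{K}(h(t)-h_1)^2\right),
\]
and from \eqref{energy_equality} we have the dissipation relation $\dot E(t) = -\int_{-1}^1 v_y^2(t,y)\,{\rm d}y + \tfrac12 u(t)g(t)$. The key is to exploit the Poincaré-type inequality on $(-1,1)$: because $v(t,\cdot)\in H^1_0(-1,1)$, we control $\Vert v(t,\cdot)\Vert^2_{L^2}$ by $\Vert v_y(t,\cdot)\Vert^2_{L^2}$; moreover, since $g(t) = v(t,h(t))$ and $h(t)$ stays in $[-1+\alpha, 1-\alpha]$, one also bounds $g^2(t) = v(t,h(t))^2$ by (a constant times) $\Vert v_y(t,\cdot)\Vert^2_{L^2}$ using $v(t,h(t)) = \int_{-1}^{h(t)} v_y(t,y)\,{\rm d}y$ and Cauchy--Schwarz — this is exactly where the free-boundary bound \eqref{estimateh1} enters, and where the case $\mathcal{K}>0$ differs because the constant $\alpha$ (hence $\eta$) then depends on the norms of $u$. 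For the term $\mathcal{K}(h(t)-h_1)^2$ there is no direct dissipation, so one needs to borrow a bit of the dissipation via a corrective cross term; this is the role of the ``special test function'' alluded to in the abstract — plugging a test function of the form $\psi(t,y) = \phi(y)$ with $\phi$ chosen so that $\phi(h(t))$ reproduces $h(t)-h_1$ (e.g. an affine function tuned to the geometry) into the weak formulation produces an identity linking $\frac{d}{dt}\big[\text{something}\big]$ to $\mathcal{K}(h(t)-h_1)^2$ modulo terms controlled by the dissipation and by $u$.

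Concretely, I would proceed as follows. First, in the case $\mathcal{K}=0$: from the Poincaré inequality $\Vert v\Vert^2_{L^2}\le c_1\Vert v_y\Vert^2_{L^2}$ and the trace bound $g^2\le c_2\Vert v_y\Vert^2_{L^2}$, we get $E(t)\le C\Vert v_y(t,\cdot)\Vert^2_{L^2}$, i.e. $\dot E \le -\kappa E + \tfrac12 u(t)g(t)$ for some $\kappa>0$ (tracking constants to get $\kappa = \tfrac14$ and the prefactor $16$). Then estimate $|u(t)g(t)| \le |u(t)|\,|g(t)|$ and absorb: since $g^2 \le 2E$, one has $\tfrac12|u(t)g(t)| \le \tfrac12|u(t)|\sqrt{2E(t)}$; a Gronwall-type argument on $\sqrt{E}$ (or directly on $E$, splitting $|u|\sqrt E \le \varepsilon E + C|u|^2$ and using $u\in L^1\cap L^2$) yields $E(t)\le e^{-\kappa t}E(0) + C\Vert u\Vert_{L^2(0,\infty)}$ after bounding $\int_0^t e^{-\kappa(t-s)}|u(s)|\,{\rm d}s$ suitably. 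Second, in the case $\mathcal{K}>0$: repeat the above but with the augmented energy including $\mathcal{K}(h-h_1)^2$; here the missing dissipation for the $h$-component is supplied by the test-function identity, which after integration by parts and using $[v_y](t,h(t))$ as it appears in \eqref{newton_second_law_without_control} gives $\frac{d}{dt}\big(g(t)(h(t)-h_1) + (\text{fluid correction})\big) = g^2(t) - \mathcal{K}(h(t)-h_1)^2 + (\text{bounded by dissipation}) + u(t)(h(t)-h_1)$; adding a small multiple $\epsilon$ of this to $E(t)$ produces a functional $V(t) = E(t) + \epsilon\,(\text{cross term})$ that is equivalent to $E(t)$ and satisfies $\dot V \le -\eta V + C|u(t)|(1+\sqrt{V(t)})$, from which the same Gronwall argument closes the estimate \eqref{input_states_stability1}. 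The exponential rate $\eta$ and the constants depend on $\alpha$ from \eqref{estimateh1}, hence on $\Vert u\Vert_{L^1}$ and $\Vert u\Vert_{L^2}$, consistent with the statement.

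The main obstacle I anticipate is the treatment of the nonlinear convective term $\tfrac12 v^2\psi_y$ when the corrective test function $\psi=\phi(y)$ is inserted: one must verify that $\int_{-1}^1 v^2(t,y)\phi'(y)\,{\rm d}y$ is genuinely dominated by the dissipation $\Vert v_y(t,\cdot)\Vert^2_{L^2}$ (times a small constant, plus lower-order terms) rather than by $\Vert v\Vert^2_{L^2}$ alone — otherwise the sign needed for the Lyapunov decay is lost. This should follow from $\Vert v(t,\cdot)\Vert_{L^\infty(-1,1)} \le \Vert v_y(t,\cdot)\Vert_{L^2(-1,1)}$ (again a consequence of $v\in H^1_0$), giving $\big|\int v^2\phi'\big| \le \Vert\phi'\Vert_{L^1}\Vert v_y\Vert^2_{L^2}$, but one must then choose $\epsilon$ small enough that this term, together with the other error terms from the cross-derivative, does not overwhelm the genuine dissipation $-\Vert v_y\Vert^2_{L^2}$ in $\dot E$. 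A secondary technical point is justifying all these manipulations at the level of finite energy solutions (the test functions in Definition~\ref{definition_of_finite_energy} must satisfy $l(t)=\psi(t,l(t))$, which constrains $\phi$ to satisfy $\phi(h(t)) = $ its own prescribed boundary value along the trajectory); I would handle this by an approximation/density argument or by working on the regularized system and passing to the limit, as is standard for such weak formulations.
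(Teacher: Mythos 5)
Your plan is essentially the paper's proof: the paper takes the Lyapunov functional $V_\varepsilon(t)=\|v(t,\cdot)\|_{L^2}^2+g^2(t)+\mathcal{K}(h(t)-h_1)^2-\varepsilon\,(h_1-h(t))P(t)$ with $P(t)=\int_{-1}^1\varphi(t,y)v(t,y)\,\mathrm{d}y+g(t)$ and $\varphi$ the piecewise-affine test function equal to $1$ at $y=h(t)$, which is exactly your ``$\epsilon\,(g(h-h_1)+\text{fluid correction})$'' cross term; it then proves equivalence of $V_\varepsilon$ with the energy, absorbs the convective and time-derivative terms into the dissipation via the estimates \eqref{estimateA1} and \eqref{estimate_A2}, uses the trace bound $g^2\le 2\int v_y^2$, Poincar\'e, and the lower bound $1\pm h\ge\alpha$ from Theorem~\ref{unforimboundtheorem}, and closes with Gronwall after Young's inequality on $u g$ and $u(h_1-h)$. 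The only cosmetic difference is that the paper obtains $\dot V_\varepsilon\le-\eta V_\varepsilon+\tfrac32 u^2$ directly (rather than your $|u|\sqrt{V}$ variant), but your alternative splitting $|u|\sqrt{E}\le\varepsilon E+C|u|^2$ reduces to the same thing.
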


The above results show that for all $\mathcal{K}\geq 0$ and $u \in L^2(0,\infty)$, system \eqref{conversition_of_mom_of_solid_without_control}-\eqref{inlitial_date_of_solid_without_control} always admits a unique global finite energy solution. In the absence of feedback control (i.e., $\mathcal{K}=0$), we cannot determine the long-time behavior of $h(t)$ purely from the initial data. In contrast, when $\mathcal{K} > 0$, the system satisfies an input-to-state type stability estimate. The reason for which this is not a standard ISS estimate is that the constant $\eta$ in \eqref{input_states_stability1} potentially depends on some norm of $u$. This limitation comes from the fact that the distance from the moving particle to the boundary, although (as shown below) positive for every $t\geqslant 0$ could go to zero when $\|u\|_{L^2(0,\infty)}\to \infty$ or $\|u\|_{L^1(0,\infty)}\to \infty$.

To prove these results, we adapt the methods from \cite[Section 2]{xu5114347global} to establish Theorem~\ref{global_exists}, apply a comparison argument in ODEs to prove Theorem \ref{unforimboundtheorem}. Finally we use Lyapunov-based techniques inspired by \cite[Section 3]{xu5114347global} to obtain the ISS type estimates \eqref{input_states_stability1} in Theorem \ref{theoreminputtostate}.


\section{Background on Input-to-State Stability}\label{background_of_input}

In this section, we introduce some basic concepts related to input-to-state stability (ISS). We begin with a general definition of a control system. The presentation closely follows the framework in Mironchenko and Prieur~\cite{inputoverview}.

\begin{definition}\label{definitionofsystem}
A control system is defined by the triple $\Sigma = (X, \mathcal{U}, \phi)$, where:
\begin{itemize}
    \item[(i)] $X$ is a normed vector space, called the \emph{state space}, equipped with the norm $\|\cdot\|_X$.
    
    \item[(ii)] $\mathcal{U}$ is a normed vector space of inputs, where each $u \in \mathcal{U}$ is a function $u: \mathbb{R}_+ \to U$ and $U$ is a normed vector space of input values. The space $\mathcal{U}$ satisfies the following two properties:
    \begin{itemize}
        \item[i] For all $u \in \mathcal{U}$ and $\tau \geq 0$, the time-shifted function $u(\cdot + \tau)$ also belongs to $\mathcal{U}$, and $\|u\|_{\mathcal{U}} \geq \|u(\cdot + \tau)\|_{\mathcal{U}}$.
        
        \item[ii] For all $u_1, u_2 \in \mathcal{U}$ and $t > 0$, the concatenation
        \[
        (u_1 \diamondsuit_t u_2)(\tau) :=
        \begin{cases}
        u_1(\tau), & \text{if } \tau \in [0, t], \\
        u_2(\tau - t), & \text{otherwise},
        \end{cases}
        \]
        also belongs to $\mathcal{U}$.
    \end{itemize}
    
    \item[(iii)] $\phi: D_\phi \to X$ is a transition map, where $D_\phi \subset \mathbb{R}_+ \times X \times \mathcal{U}$. For every initial state $x \in X$ and input $u \in \mathcal{U}$, there exists a maximal existence time $t_m = t_m(x,u) \in (0, +\infty]$ such that
    \[
    D_\phi \cap (\mathbb{R}_+ \times \{(x,u)\}) = [0, t_m) \times \{(x,u)\} \subset D_\phi.
    \]
    The interval $[0, t_m)$ is called the \emph{maximal domain of definition} of the map $t \mapsto \phi(t, x, u)$.
\end{itemize}

The triple $\Sigma$ is called a \emph{(control) system} if the following conditions hold:
\begin{itemize}
    \item[$(\Sigma 1)$] For all $(x, u) \in X \times \mathcal{U}$, we have $\phi(0, x, u) = x$.
    
    \item[$(\Sigma 2)$] If $u$ and $\tilde{u}$ agree on $[0, t]$, then $\phi(t, x, u) = \phi(t, x, \tilde{u})$.
    
    \item[$(\Sigma 3)$] For each fixed $(x, u) \in X \times \mathcal{U}$, the map $t \mapsto \phi(t, x, u)$ is continuous on its domain.
    
    \item[$(\Sigma 4)$] For all $x \in X$, $u \in \mathcal{U}$, and $t, h \geq 0$ such that $[0, t+h] \times \{(x, u)\} \subset D_\phi$, it holds that
    \[
    \phi(h, \phi(t, x, u), u(t + \cdot)) = \phi(t + h, x, u).
    \]
\end{itemize}
\end{definition}

This abstract definition covers many important types of systems, such as ordinary differential equations, partial differential equations, time-delay systems, and even more complex systems like interconnected or hybrid systems. A similar definition of nonlinear systems can be found in Section~7 of Tucsnak and Weiss~\cite{tucsnak2014well}. They explain that a nonlinear system can be regarded as a linear system with a nonlinear feedback term.

Next, we introduce a basic well-posedness condition.

\begin{definition}
A control system (as defined above) is said to be \emph{forward complete} if
\[
D_\phi = \mathbb{R}_+ \times X \times \mathcal{U},
\]
i.e., for every $(x, u) \in X \times \mathcal{U}$ and for all $t \geq 0$, the value $\phi(t, x, u) \in X$ is well-defined.
\end{definition}

\vspace{1em}

To define input-to-state stability, we use several standard classes of comparison functions:
\begin{align*}
\mathcal{P} &:= \left\{ \gamma \in C(\mathbb{R}_+) : \gamma(0) = 0, \ \gamma(r) > 0 \text{ for } r > 0 \right\}, \\
\mathcal{K} &:= \left\{ \gamma \in \mathcal{P} : \gamma \text{ is strictly increasing} \right\}, \\
\mathcal{K}_\infty &:= \left\{ \gamma \in \mathcal{K} : \gamma \text{ is unbounded} \right\}, \\
\mathcal{L} &:= \left\{ \gamma \in C(\mathbb{R}_+) : \gamma \text{ is strictly decreasing and } \lim_{t \to \infty} \gamma(t) = 0 \right\}, \\
\mathcal{K}\mathcal{L} &:= \left\{ \beta \in C(\mathbb{R}_+ \times \mathbb{R}_+, \mathbb{R}_+) : 
\begin{array}{l}
\beta(\cdot, t) \in \mathcal{K} \text{ for all } t \geq 0, \\
\beta(r, \cdot) \in \mathcal{L} \text{ for all } r > 0
\end{array}
\right\}.
\end{align*}

Functions in class $\mathcal{P}$ are also called \emph{positive definite functions}.

\vspace{1em}
Now we introduce the main concept of this section: input-to-state stability.

\begin{definition}
System $\Sigma = (X, \mathcal{U}, \phi)$ is said to be \emph{(uniformly) input-to-state stable (ISS)} if there exist functions $\beta \in \mathcal{K}\mathcal{L}$ and $\gamma \in \mathcal{K}_\infty$ such that for all $x \in X$, $u \in \mathcal{U}$, and $t \geq 0$, we have
\begin{equation}
\|\phi(t, x, u)\|_X \leq \beta(\|x\|_X, t) + \gamma(\|u\|_{\mathcal{U}}).
\tag{1.6}
\end{equation}
\end{definition}

This inequality describes how the state of the system behaves over time. Specifically, the first term $\beta(\|x\|_X, t)$ shows that the effect of the initial condition decreases as time increases. The second term $\gamma(\|u\|_{\mathcal{U}})$ shows how the input signal influences the state in the long run.

\section{Proof of Theorem \ref{global_exists}}\label{section_global}
In this section, we establish the global existence and uniqueness of finite energy solutions as defined in \eqref{definition_of_finite_energy}. To begin, we introduce the following theorem, which concerns the local (in time) well-posedness of the system \eqref{conversition_of_mom_of_solid_without_control}–\eqref{inlitial_date_of_solid_without_control}. The theorem is stated as follows:

\begin{theorem}\label{local_existstheorem}
	For every $0<\varepsilon\leq \frac14$, $\kappa>0$ and $h_1\in (-1,1)$, we denote by $\mathcal{B}_{\varepsilon,\kappa}$ the set $\begin{bmatrix}
			v_0\\
			g_0\\
			h_0\\
			u
		\end{bmatrix}\in L^2(-1,1)\times\mathbb{R}\times \mathbb{R}\times L^2(0,\infty)$ satisfying
\begin{align}
	&\left\Vert v_0\right\Vert^2_{L^2(-1,1)}+g^2_0+\Vert u\Vert_{L^2(0,\infty)}+\left(h_1-h(t)\right)^2
    \leq \kappa,\\
	&-1+4\varepsilon \leq h_0\leq 1-4\varepsilon.
\end{align}
Then there exist $T>0$, $T$ only denpends on $\varepsilon$ and $\kappa$, such that for every $\begin{bmatrix}
		v_0\\
		g_0\\
		h_0\\
		u
	\end{bmatrix}\in \mathcal{B}_{\varepsilon,\kappa}$, system \eqref{conversition_of_mom_of_solid_without_control}-\eqref{inlitial_date_of_solid_without_control} admits a unique finite energy solution defined by Definition \ref{definition_of_finite_energy} in the time interval $[0,T]$. Moreover, for every $t\in [0,T]$ we have the following energy estimate:
\begin{align}\label{energy_equality1}
		&\int_{-1}^{1}v^2(t,y)\,{\rm d}y+g^2(t)+\mathcal{K}\left(h_1-h(t)\right)^2+\int_{0}^{t}\int_{-1}^{1}v_y^2(\sigma ,y)\,{\rm d}y{\rm d}\sigma\notag\\
		\leq& \int_{0}^{t}u^2(\sigma)\,{\rm d}\sigma+\int_{-1}^{1}v^2(0,y)\,{\rm d}y+g^2(0)+\mathcal{K}\left(h_1-h_0\right)^2,
	\end{align}
		\end{theorem}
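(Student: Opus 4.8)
The plan is to prove local-in-time existence and uniqueness by a Banach fixed-point argument carried out after flattening the free interface, using the linear fluid--point-mass semigroup theory of \cite{cindea2015particle,liu2013single}, and then to derive \eqref{energy_equality1} from the natural energy identity associated with the weak formulation. For the flattening, given a candidate trajectory $h\in W^{1,\infty}(0,T)$ with $h(0)=h_0$ and $h(t)\in(-1+2\varepsilon,1-2\varepsilon)$ on $[0,T]$, I pick a smooth family of diffeomorphisms $\Psi_t:(-1,1)\to(-1,1)$, depending smoothly on the scalar parameter $h(t)$, equal to the identity near $y=\pm1$, and sending $h(t)$ to the fixed point $h_0$. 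The substitution $\bar v(t,z)=v(t,\Psi_t^{-1}(z))$ converts \eqref{conversition_of_mom_of_solid_without_control}--\eqref{newton_second_law_without_control} into a quasilinear parabolic problem on the fixed split domain $(-1,h_0)\cup(h_0,1)$, with coefficients that are smooth functions of $h(t)$ and $\dot h(t)$, with transmission and Dirichlet conditions now imposed at the fixed points $h_0$ and $\pm1$, coupled to Newton's law for the scalar $g$; for $\|h-h_0\|_{C([0,T])}$ small the principal part is a small perturbation of $-\partial_{zz}$.

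\textbf{Linear theory and fixed point.} On the Hilbert space $L^2(-1,1)\times\mathbb{R}$ (the product encoding the point mass), recall from \cite{cindea2015particle,liu2013single} that the operator $\mathcal{A}(\phi,l)=\bigl(\phi'',[\phi'](h_0)\bigr)$, with domain $\{(\phi,l):\phi\in H^1_0(-1,1),\ \phi|_{(-1,h_0)}\in H^2,\ \phi|_{(h_0,1)}\in H^2,\ \phi(h_0)=l\}$, generates an analytic $C_0$-semigroup and enjoys maximal $L^2$-regularity; by perturbation (Step 1 coefficients close to constant), the transformed linear problem with given forcing has a unique solution in $\bigl[C([0,T];L^2(-1,1))\cap L^2(0,T;H^1_0(-1,1))\bigr]\times H^1(0,T)$, with constants uniform for $T$ in a bounded interval. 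Now fix $\varepsilon,\kappa$. On the closed ball $\mathcal{C}_{R,T}$ of radius $R$ in that space, define $\mathcal{N}(\tilde v,\tilde g)=(v,g)$ by: set $\tilde h(t)=h_0+\int_0^t\tilde g$ (so that $\|\tilde h-h_0\|_{C([0,T])}\le\sqrt T\,R$ keeps $\tilde h\in(-1+2\varepsilon,1-2\varepsilon)$ for $T$ small, making $\Psi_t$ well defined), solve the transformed linear problem with the interface coefficients built from $\tilde h$, the Burgers source $\tfrac12\partial_z(\bar{\tilde v}^{\,2})\in L^2(0,T;H^{-1})$, and the feedback $\mathcal{K}(h_1-\tilde h)$, and transform back. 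A one-dimensional Gagliardo--Nirenberg estimate of the type $\|\tilde v\|_{L^4((0,T)\times(-1,1))}\le C_R\,T^{1/8}$ shows that the nonlinear term contributes $O(R^2T^{1/4})$, while the variable coefficients deviate from the constant ones by $O(\sqrt T\,R)$; hence, taking $R$ of order $\sqrt\kappa$ (so that the linear flow of the data lies well inside the ball) and then $T=T(\varepsilon,\kappa)$ small, $\mathcal{N}$ maps $\mathcal{C}_{R,T}$ into itself, and the same estimates applied to differences — together with local Lipschitz dependence of the coefficients on $h$ and of the nonlinearity on $v$ — make $\mathcal{N}$ a contraction. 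The fixed point, transformed back, is a finite energy solution on $[0,T]$ with $T$ depending only on $\varepsilon$ and $\kappa$; uniqueness among all finite energy solutions holds because any such solution has $h\in H^1(0,T)\hookrightarrow C([0,T])$ with $\|h-h_0\|_{C([0,T])}$ controlled by its energy, which places it in the regime above.

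\textbf{Energy estimate.} Testing the weak formulation with $\psi=v$, $l=g$ — justified via a Galerkin/regularization procedure, since $v$ only satisfies $\dot v\in L^2(0,T;H^{-1})$ — gives $\tfrac12\frac{d}{dt}\bigl(\|v\|_{L^2}^2+g^2+\mathcal{K}(h_1-h)^2\bigr)+\|v_y\|_{L^2}^2=u(t)g(t)$: the cubic term $\tfrac13\int\partial_y(v^3)$ vanishes because $v(\cdot,\pm1)=0$ and $v$ is continuous across $h(t)$ with value $g$; the viscous term yields $\|v_y\|_{L^2}^2+g[v_y](h)$; and \eqref{newton_second_law_without_control} multiplied by $g$ cancels $g[v_y](h)$ and rewrites $\mathcal{K}(h_1-h)g$ as $-\tfrac{\mathcal{K}}{2}\frac{d}{dt}(h_1-h)^2$. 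Integrating over $[0,t]$, multiplying by $2$, and using $2\int_0^t|ug|\le\int_0^t u^2+\int_0^t g^2$ together with the Poincar\'e-type bound $g(\sigma)^2=|v(\sigma,h(\sigma))|^2\le\tfrac{1-h(\sigma)^2}{2}\|v_y(\sigma,\cdot)\|_{L^2}^2\le\tfrac12\|v_y(\sigma,\cdot)\|_{L^2}^2$ (from $g=\int_{-1}^{h}v_y=-\int_h^1 v_y$), one absorbs $\int_0^t g^2$ into $\int_0^t\|v_y\|_{L^2}^2$ and arrives at \eqref{energy_equality1}.

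\textbf{Main obstacle.} The crux is the fixed point: installing the maximal-regularity theory for the fluid-plus-point-mass operator on the flattened, variable-coefficient domain, and extracting enough powers of $T$ from both the Burgers nonlinearity and the moving-interface coefficients that the self-map and contraction properties hold on a time interval whose length depends only on $\varepsilon$ and $\kappa$; the weak time regularity of $v$ also requires care in making the energy identity rigorous.
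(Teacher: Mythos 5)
Your outline is essentially the route the paper itself points to: the paper gives no proof of this theorem, deferring entirely to \cite[Sections 2--4]{cindea2015particle}, and that reference proceeds exactly as you do --- flatten the free boundary by a change of variables depending on $h(t)$, use the analytic semigroup generated by the fluid--point-mass operator on the split domain together with a Banach fixed point absorbing the Burgers nonlinearity and the moving-interface coefficients via small powers of $T$, and then derive \eqref{energy_equality1} by testing with $(v,g)$ and absorbing $\int_0^t g^2$ into $\int_0^t\|v_y\|_{L^2}^2$ through $g^2\le\tfrac12\|v_y\|_{L^2}^2$. The sketch is correct (your Gagliardo--Nirenberg and Poincar\'e-type computations check out), so no further comparison is needed.
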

 Here, we do not provide the proof of the above theorem, as it can be established step by step following \cite[Section 2-4]{cindea2015particle}.
 
 As a consequence of the above results, we have:
 
\begin{corollary}\label{corollary_contraction}
	Assume that for every $\tau>0$, a finite energy solution in Definition \ref{definition_of_finite_energy} of system \eqref{conversition_of_mom_of_solid_without_control}-\eqref{inlitial_date_of_solid_without_control} defined on $[0,\tau)$
	satisfies
	\begin{equation}
		\sup_{t \in [0,\tau)}\vert h(t)\vert <1.
	\end{equation}
	Then the considered finite energy solution is global.
\end{corollary}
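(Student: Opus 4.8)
The plan is to derive the global existence from the local well-posedness result (Theorem~\ref{local_existstheorem}) together with the a~priori energy estimate \eqref{energy_equality1}, using a continuation argument. First I would fix $\tau>0$ and suppose, towards a contradiction, that the maximal interval of existence of the finite energy solution is $[0,T_{\max})$ with $T_{\max}<\tau$ (if $T_{\max}=+\infty$ there is nothing to prove). By hypothesis $\sup_{t\in[0,T_{\max})}|h(t)|<1$, so there exists $\varepsilon\in(0,\tfrac14]$ such that $-1+4\varepsilon\le h(t)\le 1-4\varepsilon$ for all $t\in[0,T_{\max})$.

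Next I would establish that the solution stays in a bounded set of the state space, uniformly on $[0,T_{\max})$. This follows from the energy estimate \eqref{energy_equality1}: for every $t\in[0,T_{\max})$,
\begin{equation*}
\int_{-1}^{1}v^2(t,y)\,{\rm d}y+g^2(t)+\mathcal{K}\left(h_1-h(t)\right)^2
\leq \Vert u\Vert^2_{L^2(0,\infty)}+\Vert v_0\Vert^2_{L^2(-1,1)}+g^2_0+\mathcal{K}\left(h_1-h_0\right)^2=:\kappa_0,
\end{equation*}
so $\Vert v(t,\cdot)\Vert^2_{L^2(-1,1)}+g^2(t)$ and $(h_1-h(t))^2$ are all bounded by a constant $\kappa$ depending only on the data. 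In particular $\big(v(t,\cdot),g(t),h(t)\big)$ remains, for all $t\in[0,T_{\max})$, in the set $\mathcal{B}_{\varepsilon,\kappa}$ of Theorem~\ref{local_existstheorem} (with the input replaced by the shifted input $u(\cdot+t)$, whose $L^2(0,\infty)$-norm does not exceed that of $u$).

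Now I would invoke Theorem~\ref{local_existstheorem} to get a single time step $T_0=T_0(\varepsilon,\kappa)>0$, \emph{independent of the base point}, such that starting from any datum in $\mathcal{B}_{\varepsilon,\kappa}$ the system admits a finite energy solution on $[0,T_0]$. Pick $t_0\in(T_{\max}-T_0,T_{\max})$; taking $\big(v(t_0,\cdot),g(t_0),h(t_0),u(\cdot+t_0)\big)\in\mathcal{B}_{\varepsilon,\kappa}$ as new initial data, we obtain a finite energy solution on $[t_0,t_0+T_0]$, which by uniqueness coincides with the original one on the overlap and hence extends it past $T_{\max}$ (since $t_0+T_0>T_{\max}$). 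This contradicts the maximality of $T_{\max}$, so $T_{\max}\ge\tau$; as $\tau>0$ was arbitrary, the solution is global. The concatenation of the local solutions is well defined and the properties in Definition~\ref{definition_of_finite_energy} are inherited on each subinterval, so the glued object is a global finite energy solution.

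The one delicate point is the gluing/uniqueness step: one must check that concatenating finite energy solutions produced on successive intervals yields an object satisfying the weak formulation of Definition~\ref{definition_of_finite_energy} on the whole interval, and that the uniqueness part of Theorem~\ref{local_existstheorem} indeed forces the restriction of any extension to agree with the solution already constructed (so that ``extend past $T_{\max}$'' is legitimate). This is routine once one notes that the test functions in Definition~\ref{definition_of_finite_energy} can be localised in time and that the weak formulation is additive over adjacent time intervals; I would treat it briefly rather than in full detail.
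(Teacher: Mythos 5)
Your proof is correct and follows exactly the continuation argument that the paper leaves implicit (the corollary is stated as a direct consequence of Theorem~\ref{local_existstheorem} with no written proof): the uniform distance of $h(t)$ from $\pm 1$ together with the energy bound \eqref{energy_equality1} and the non-increase of $\Vert u(\cdot+t_0)\Vert_{L^2(0,\infty)}$ keep the restarted data in a fixed set $\mathcal{B}_{\varepsilon,\kappa}$, so the uniform local existence time lets you extend past any putative maximal time. The gluing/uniqueness step you flag is indeed the only point needing care, and your treatment of it is adequate.
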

Our main result in this section can be stated as follows.
\begin{proposition}
    \label{theorem_global}
	Let $v_0\in L^2(-1,1)$, $g_0\in \mathbb{R}$, $h_0,\,h_1\in (-1,1)$ and $u\in L^2(0,\infty)$. Moreover, let $\begin{bmatrix}
	    v\\
        g\\
        h
	\end{bmatrix}$ be the finite energy solution defined by Definition \ref{definition_of_finite_energy} on $(0, \tau )$. Then for every $t\in (0,\tau)$, the position of praticle $h(t)$ statisfies 
	\begin{align}\label{estimateh}
		-1+c_1(t)
		\leq h(t)
		\leq1-c_2(t) \qquad\qquad (t\in (0,\tau ))
	\end{align}
	where
	\begin{align}
		c_1(t)=&\frac{2}{1+\max\{2,\frac{({1+h_0})}{1-h_0}\}\exp\left(C+t^{\frac12}\left(\int_{0}^{t}u^2(s){\rm d}s\right)^{\frac12}\right)},\\
		c_2(t)=&\frac{2}{1+\max\{2,\frac{({1-h_0})}{1+h_0}\}\exp\left(\left(C+t^{\frac12}\left(\int_{0}^{t}u^2(s){\rm d}s\right)^{\frac12}\right)\right)},\notag\\
        C=&10\left(\Vert v_0\Vert_{L^2(-1,1)}^2+g^2_0+\mathcal{K}(h_1-h_0)^2
	\right.\notag\\
&\left.	+ \Vert u\Vert^2_{L^2(0,\infty)}+\sqrt{\Vert v_0\Vert_{L^2(-1,1)}^2+g^2_0+\mathcal{K}(h_1-h_0)^2+ \Vert u\Vert^2_{L^2(0,\infty)}}\right).
	\end{align}
\end{proposition}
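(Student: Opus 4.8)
The plan is to derive a differential inequality for the distance from the particle to each endpoint and then integrate it by a Gronwall-type comparison. Set $d_+(t) = 1 - h(t)$ and $d_-(t) = 1 + h(t)$; the goal is to show that neither can decay faster than the explicit rate encoded in $c_1, c_2$. I would work with $d_+$ (the estimate for $d_-$ being symmetric, replacing $h_0$ by $-h_0$). Since $\dot h(t) = g(t) = v(t,h(t))$, I first need pointwise control of $g(t)$, or rather of a suitable weighted version of it, in terms of the energy. The natural quantity to track is something like $E(t) := \|v(t,\cdot)\|_{L^2(-1,1)}^2 + g^2(t) + \mathcal{K}(h_1 - h(t))^2$, which by the energy estimate \eqref{energy_equality1} of Theorem~\ref{local_existstheorem} satisfies $E(t) + \int_0^t \|v_y\|^2 \le E(0) + \|u\|_{L^2(0,\infty)}^2 =: \mathcal{E}_0$ — this explains the appearance of $\mathcal{E}_0$ (the quantity under the square root in $C$) in the statement.

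The key step is a pointwise bound of the form $|\dot h(t)| = |v(t,h(t))| \le F(t)\, \varphi(d_+(t))$ for an appropriate $F \in L^1_{loc}$ and an appropriate function $\varphi$, obtained from a one-dimensional Sobolev/trace-type inequality that exploits the Dirichlet boundary condition $v(t,1) = 0$. Concretely, for $y \in (h(t),1)$ one writes $v(t,y) = -\int_y^1 v_z(t,z)\,dz$, so by Cauchy--Schwarz $|v(t,h(t))| \le (1 - h(t))^{1/2} \|v_y(t,\cdot)\|_{L^2(h(t),1)} \le d_+(t)^{1/2}\|v_y(t,\cdot)\|_{L^2(-1,1)}$. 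Hence
\begin{align}
\left|\frac{d}{dt} d_+(t)\right| = |g(t)| \le d_+(t)^{1/2}\,\|v_y(t,\cdot)\|_{L^2(-1,1)}.
\end{align}
Dividing by $d_+(t)^{1/2}$ gives $\left|\frac{d}{dt}\, d_+(t)^{1/2}\right| \le \tfrac12 \|v_y(t,\cdot)\|_{L^2(-1,1)}$, so $d_+(t)^{1/2} \ge d_+(0)^{1/2} - \tfrac12 \int_0^t \|v_y\|_{L^2}\,ds \ge d_+(0)^{1/2} - \tfrac12 t^{1/2}\big(\int_0^t \|v_y\|_{L^2}^2\big)^{1/2}$ by Cauchy--Schwarz in time, and $\int_0^t \|v_y\|^2 \le \mathcal{E}_0$. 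This already yields a lower bound on $d_+$, but only as long as the right-hand side stays positive, i.e. for small $t$ or small energy; it does not by itself give the advertised estimate, which never degenerates to zero. To upgrade it, I would instead keep the factor $d_+(t)$ on the right and also use that $|g(t)| \le E(t)^{1/2} \le \mathcal{E}_0^{1/2}$, combining the two bounds multiplicatively: $|\dot d_+| \le \min\{\mathcal{E}_0^{1/2}, d_+^{1/2}\|v_y\|\}$. A cleaner route giving exactly the stated form: bound $|g(t)| \le C' d_+(t)(1 + \|v_y(t,\cdot)\|_{L^2} + \mathcal{E}_0)$ for a universal $C'$ — using $d_+^{1/2}\|v_y\| \le d_+(\|v_y\|^2 + \tfrac14)^{1/2}\cdot\big(\text{something}\big)$ together with the crude bound $d_+ \le 2$ when $d_+^{1/2} \ge \tfrac12 d_+$, and absorbing the regime $d_+ \ge 1$ trivially — so that $\big|\frac{d}{dt}\log d_+(t)\big| \le C'(1 + \|v_y\|_{L^2} + \mathcal{E}_0)$. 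Integrating,
\begin{align}
|\log d_+(t) - \log d_+(0)| \le C'\Big(t + \mathcal{E}_0 t + \int_0^t \|v_y\|_{L^2}\,ds\Big) \le C'\Big((1+\mathcal{E}_0)t + t^{1/2}\Big(\int_0^t u^2\Big)^{1/2} + \text{const}\Big),
\end{align}
which after renaming constants and using $\mathcal{E}_0 \le \Vert v_0\Vert^2 + g_0^2 + \mathcal{K}(h_1-h_0)^2 + \Vert u\Vert_{L^2(0,\infty)}^2 + \sqrt{(\cdots)}$ (the extra square-root term absorbing all the lower-order $t$-contributions and the universal constant) matches the definition of $C$. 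Exponentiating gives $d_+(t) \ge d_+(0)\exp(-C - t^{1/2}(\int_0^t u^2)^{1/2})$, and a short algebraic manipulation converting $d_+(0) = 1 - h_0$ and the bound $\max\{\cdots\}$ into the fractional form $c_2(t) = \frac{2}{1 + \max\{2, \frac{1-h_0}{1+h_0}\}\exp(\cdots)}$ closes the estimate; the analogous computation with $h_0 \mapsto -h_0$ gives $c_1$.

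The main obstacle I anticipate is getting the logarithmic differential inequality with the \emph{right} dependence — i.e. ensuring the coefficient multiplying $\|v_y\|_{L^2}$ and the $t$-term only involves the conserved-energy quantity $\mathcal{E}_0$ and not $d_+$ itself (otherwise Gronwall would not close), and simultaneously making the constant come out as the specific expression $C$ with that $\sqrt{\cdot}$ term. This is a matter of carefully choosing where to spend the factor $d_+^{1/2}$ versus $d_+$ and how to split $\|v_y\|_{L^2}$ via Young's inequality; the trace inequality itself and the Gronwall step are routine. A secondary point to be careful about is that all these manipulations are done on the interval $(0,\tau)$ where the finite energy solution is already known to exist and $h(t) \in (-1,1)$, so $d_\pm(t) > 0$ throughout and $\log d_\pm$ is well-defined; one should phrase the argument via the absolutely continuous function $t \mapsto \log d_\pm(t)$ (legitimate since $h \in H^1 \subset C$ and $d_\pm$ is bounded below on compact subintervals of $(0,\tau)$) rather than differentiating under questionable regularity.
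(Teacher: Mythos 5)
There is a genuine gap, and it sits exactly where you flagged your ``main obstacle'': the upgrade from $|g(t)|\le d_+(t)^{1/2}\|v_y(t,\cdot)\|_{L^2}$ to a bound of the form $|g(t)|\le C'\,d_+(t)\bigl(1+\|v_y(t,\cdot)\|_{L^2}+\mathcal{E}_0\bigr)$ is false. Writing $d_+^{1/2}\|v_y\| = d_+\cdot\|v_y\|/d_+^{1/2}$, the factor $\|v_y\|/d_+^{1/2}$ is not controlled by the energy when $d_+$ is small, and smallness of $d_+$ is precisely the regime you must handle; the proposed manipulation via ``$d_+^{1/2}\ge \tfrac12 d_+$'' goes in the wrong direction (it bounds $d_+^{1/2}$ from below, not above, by a multiple of $d_+$). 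Worse, the differential inequality $|\dot d_+|\le d_+^{1/2}\|v_y\|$ together with $\int_0^t\|v_y\|^2\le\mathcal{E}_0$ is genuinely consistent with $d_+$ reaching zero in finite time (take $d_+(t)=(d_+(0)^{1/2}-\tfrac12\int_0^t\|v_y\|\,ds)^2$, and note $\int_0^t\|v_y\|\,ds$ can reach $2d_+(0)^{1/2}$ once $t^{1/2}\mathcal{E}_0^{1/2}$ is large enough). So no rearrangement of the trace inequality plus the energy estimate alone can produce the non-degenerating lower bound on $d_\pm$; the Gronwall step for $\log d_\pm$ cannot close from these ingredients.

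The missing idea is that the control on $\dot h/(1\pm h)$ comes from the momentum balance, not from a trace inequality. The paper takes the test function $\begin{bmatrix}\psi\\ l\end{bmatrix}=\begin{bmatrix}\varphi\\ 1\end{bmatrix}$ with $\varphi$ the piecewise-linear hat function \eqref{testing_function} (equal to $1$ at $y=h(t)$, vanishing at $y=\pm1$) in Definition \ref{definition_of_finite_energy}. The viscous term then produces \emph{exactly} $\int_{-1}^{1}v_y\varphi_y\,{\rm d}y=\frac{\dot h}{1+h}+\frac{\dot h}{1-h}=\frac{\rm d}{{\rm d}t}\bigl(\ln(1+h)-\ln(1-h)\bigr)$ as an identity, while every other term in the weak formulation ($P$, $A_1$, $A_2$, the $u$ and $\mathcal{K}$ terms) is bounded using only the energy estimate \eqref{energy_equality1} --- crucially, with no division by $1\pm h$ anywhere. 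Integrating the resulting identity \eqref{equality_of_h} and exponentiating gives the stated $c_1,c_2$. Your intuition about where the constant $C$ and the $t^{1/2}\|u\|_{L^2}$ term come from is right, and your care about absolute continuity of $\log d_\pm$ is appropriate, but without the weak-formulation identity the argument does not go through.
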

\begin{proof}
	Following the approach proposed in \cite[Section II]{Helsa2004PhD75H}, we begin by choosing the test function $\begin{bmatrix} \psi\\ l\end{bmatrix} =\begin{bmatrix}
		\varphi\\
		1
	\end{bmatrix}$ in the Definition \ref{definition_of_finite_energy},
	where $\varphi$ is defined by:
	\begin{equation}\label{testing_function}
		\varphi(t,y)=\begin{cases}
			\frac{1}{1+h(t)}(y+1) & \qquad\qquad(y\in (-1,h(t))),\\
			\frac{1}{1-h(t)}(1-y)                       & \qquad\qquad(y\in(h(t),1)).	
		\end{cases}
	\end{equation}
	It follows that
	\begin{align}\label{intergrate_form}
		&\int_{-1}^{1}v(t,y)\varphi(t,y)\,{\rm d}y
		-
		\int_{-1}^{1}v_0(y)\varphi(0,y)\,{\rm d}y
		-
		\int_{0}^{t}\int_{-1}^{1}v(\sigma, y)\frac{\partial \varphi}{\partial \sigma}(\sigma, y)\,{\rm d}y\,{\rm d}\sigma\notag\\
		&+\int_{0}^{t} \int_{-1}^{1} v(\sigma, y) v_y(\sigma,y) \varphi(\sigma, y) \,{\rm d}y\,{\rm d}\sigma+g(t)-g_0+\int_{0}^{t}\int_{-1}^{1}v_y(\sigma, y)\varphi_y(\sigma, y)\notag\\
		=&\int_{0}^{t}\mathcal{K}(h_1-h(\sigma))\,{\rm d}\sigma+\int_{0}^{t}u(\sigma)\,{\rm d}\sigma,
	\end{align}

	Moreover, using the defintion of $\varphi$ in \eqref{testing_function} and the fact that $v(\sigma,h(\sigma))=\dot h(\sigma)$ for almost every $t\in [0, \tau),\;\sigma\in [0,\tau)$, we have
	\begin{align}\label{caculate_v_xx}
		&\int_{-1}^{1}v_y(\sigma,y)\varphi_y(\sigma,y)\,{\rm d}y\notag\\
		=&\int_{-1}^{h(\sigma)}v_y(\sigma,y)\varphi_y(\sigma,y) \,{\rm d}y+\int_{h(\sigma)}^{1}v_y(\sigma,y)\varphi_y(\sigma,y)\,{\rm d}y\notag\\
		=&\frac{\dot{h}(\sigma)}{1+h(\sigma)}+\frac{\dot{h}(\sigma)}{1-h(\sigma)}\notag\\
		=&\frac{\rm d}{{\rm d} \sigma}(\ln (1+h(\sigma)))-\frac{\rm d}{{\rm d} \sigma}(\ln (1-h(\sigma))).
	\end{align}
	
	Using \eqref{defintion_of_P}, \eqref{defintion_of_A1}, \eqref{defintion_of_A2} and \eqref{caculate_v_xx}, we can rewrite \eqref{intergrate_form} as
	\begin{align}\label{equality_of_h}
		&\ln \frac{1+h(t)}{1+h_0}-\ln \frac{1-h(t)}{1-h_0}\notag\\
		=&\int_{0}^{t}\mathcal{K}(h_1-h(\sigma))\,{\rm d}\sigma+\int_{0}^{t}u(\sigma)\,{\rm d}\sigma+P(0)-P(t)+\int_{0}^{t}A_1(\sigma ){\rm d}\sigma-\int_{0}^{t} A_2(\sigma )\,{\rm d}\sigma,
	\end{align}
	where, for almost every $t\in [0,\tau),\; \sigma \in [0,t]$, we have set
	\begin{align}
		\label{defintion_of_P}&P(\sigma )=\int_{-1}^{1}\varphi(\sigma,y) v(\sigma,y)\,{\rm d}y+g(\sigma),\\
		\label{defintion_of_A1}&A_1(\sigma )=\int_{-1}^{1}v(\sigma,y)\frac{\partial \varphi}{\partial \sigma}(\sigma,y)\,{\rm d}y,\\
		\label{defintion_of_A2}&A_2(\sigma )=\int_{-1}^{1} v(\sigma,y)v_y(\sigma,y)\varphi(\sigma,y)\,{\rm d}y.
	\end{align}
	Next, we use the energy estimate \eqref{energy_equality1} to estimate the four last terms in the right hand side of \eqref{equality_of_h}.
	
	Firstly, we remark that for $P$ defined in \eqref{defintion_of_P} we can use \eqref{energy_equality1}, the Cauchy-Schwarz inequality
	and the fact that $|\varphi|$ is uniformly bounded by $1$  to get that
	\begin{align}\label{estimate_P}
		\vert P (\sigma)\vert^2 \leq 4\left(\Vert v_0\Vert_{L^2(-1,1)}^2+g^2_0+\mathcal{K}\left(h_1-h_0\right)^2+\Vert u\Vert^2_{L^2(0,\infty)}\right) \quad (t\in [0,\tau),\  a.e. \sigma \in [0,t]).
	\end{align}
	
	To estimate $A_1$, we note that  from \eqref{defintion_of_A1} and the definition of $\varphi$ in \eqref{testing_function}, for $t\in [0,\tau))$ and almost every $\sigma\in [0,t]$ we have:
	\begin{equation}\label{with_name}
		A_1(\sigma )=-\frac{g(\sigma)}{(1+h(\sigma))^2}\, \int_{-1}^{h(\sigma)}v(\sigma,y)(1+y)\, {\rm d}y
		+\frac{g(\sigma)}{(1-h(\sigma))^2}\, \int_{h(\sigma)}^{1}
		v(\sigma,y)(1-y)\, {\rm d}y.
	\end{equation}
	To estimate the first term the right hand side of the above formula, we firstly integrate by parts  to get
	\begin{align}
		& \frac{g(\sigma)}{(1+h(\sigma))^2}\int_{-1}^{h(\sigma)}v(\sigma,y)(1+y)\,{\rm d}y \notag\\
		= &\frac{g(\sigma)}{2(1+h(\sigma))^2}\int_{-1}^{h(\sigma)}v(\sigma,y) \frac{\rm d}{{\rm d}y} \left[ (1+y)^2\right]
		\, {\rm d}y\notag\\
		= &
		- \frac{g(\sigma)}{2(1+h(\sigma))^2}
		\int_{-1}^{h(\sigma)}v_y(\sigma,y) (1+y)^2\,{\rm d}y+
		\frac12 g^2(\sigma)
		\quad \left(t\in [0,\tau),\; \sigma \in [0,t] \quad {\rm a.e.}\right),
	\end{align}
	Moreover, by combining the Cauchy-Schwarz inequality, a trace theorem, the facts $\vert 1+h(\sigma)\vert \leqslant 2$ and $v(t,h(t))=g(t)$, we have
	\begin{align*}
		&\left\vert
		\frac{g(\sigma)}{2(1+h(\sigma))^2}
		\int_{-1}^{h(\sigma)} v_y(\sigma, y) (1+y)^2 \, \mathrm{d}y - \frac{1}{2}g^2(\sigma)
		\right\vert
		\\
		\leq {}&\frac{\vert g(\sigma) \vert }{2(1+h(\sigma))^2}
		\left(
		\int_{-1}^{h(\sigma)} v_y^2(\sigma, y) \, \mathrm{d}y
		\right)^{\frac{1}{2}}
		\left(
		\int_{-1}^{h(\sigma)} (1+y)^4 \, \mathrm{d}y
		\right)^{\frac{1}{2}}
		+\frac{1}{2}g^2(\sigma)\notag\\
		\leq{} &\frac{1}{\sqrt{5}}\vert g(\sigma)\vert (1+h(\sigma))^{\frac{1}{2}}
		\left( \int_{-1}^{h(\sigma)} v_y^2(\sigma, y) \, \mathrm{d}y \right)^{\frac{1}{2}} + \frac{1}{2}g^2(\sigma) \\
		\leq{} &g(\sigma) \left( \int_{-1}^{h(\sigma)} v_y^2(\sigma, y) \, \mathrm{d}y \right)^{\frac{1}{2}} + \frac{1}{2}g^2(\sigma) \quad  \\
		\leq{} &2g^2(\sigma) + 2 \int_{-1}^{h(\sigma)} v_y^2(\sigma, y) \, \mathrm{d}y  \\
		\leq{} &6\int_{-1}^{h(\sigma)} v_y^2(\sigma, y) \, \mathrm{d}y  \qquad\qquad\qquad(t\in [0,\tau),\ \sigma\in [0,t]\; {\rm a.e.}).
	\end{align*}
	Combining the above formula and \eqref{estimateA1} it follows that
	\begin{align}\label{estimateA1}
		&\left|\frac{g(\sigma)}{(1+h(\sigma))^2}\int_{-1}^{h(\sigma)}v(\sigma,y)(1+y)\,{\rm d}y\right|\notag\\
		\leqslant& 6\int_{-1}^{h(\sigma)} v_y^2(\sigma, y) \, \mathrm{d}y  \qquad\qquad(t\in [0,\tau),\ \sigma\in [0,t]\; {\rm a.e.}).
	\end{align}
	In a completely similar manner we can estimate the second term in the right hand side of left term of \eqref{with_name}  to get
	\begin{equation*}\label{A1_second}
		\left|\frac{g(\sigma)}{(1-h(\sigma))^2}\int_{h(\sigma)}^1 v(\sigma,y)(1-y)\,{\rm d}y\right|
		\leqslant 6\int_{h(\sigma)}^1 v_y^2(\sigma, y) \, \mathrm{d}y  \qquad\qquad(t\in [0,\tau),\ \sigma\in [0,t]\; {\rm a.e.}).
	\end{equation*}
	Putting together the last two inequalities and using \eqref{energy_equality1}, it follows that
	\begin{align}\label{estimate_of_A_1}
		&\left\vert \int_{0}^{t}A_1(\sigma ){\rm d}\sigma\right\vert
		\leq\int_{0}^{t}\vert A_1(\sigma )\vert {\rm d}\sigma
		\leq 6\int_{0}^{t}\int_{-1}^{1} v^2_y(\sigma,y)\,{\rm d}y\, {\rm d}\sigma
	\notag\\	\leq& 6 \left(\Vert v_0\Vert_{L^2(-1,1)}^2+g^2_0+\mathcal{K}\left(h_1-h_0\right)^2+\Vert u\Vert^2_{L^2(0,\infty)}\right)\qquad \left(t\in [0,\tau)\right).
	\end{align}
	We next estimate of $A_2$ defined in \eqref{defintion_of_A2}. Since $\varphi$ defined by \eqref{testing_function} is uniformly bounded by $1$, using the Cauchy-Schwarz  and Poincar{\'e} inequalities, it follows that for every $t\in [0,\tau)$ and almost every $\sigma\in [0,t]$, we have
	\begin{align}\label{estimate_of_A2}
		&\vert A_2(\sigma)\vert
		\leq \int_{-1}^{1}\vert v(\sigma, y)v_y(\sigma,y)\phi(\sigma,y)\vert\, {\rm d}y\notag\\
		\leq& \left(\int_{-1}^{1} v^2(\sigma,y)\,{\rm d}y\right)^{\frac12}\left(\int_{-1}^{1} v_y^2(\sigma,y)\,{\rm d}y\right)^{\frac12}
		\leq4 \int_{-1}^{1}v^2_y(\sigma ,y)\,{\rm d}y.
	\end{align}
	From the above estimate and the energy estimate \eqref{energy_equality1} we obtain:
	\begin{align}\label{estimate_A2}
		&\left\vert \int_{0}^{t}A_2(\sigma ){\rm d}\sigma\right\vert
		\leq 4\int_{0}^{t}\int_{-1}^{1}v^2_y(\sigma ,y)\,{\rm d}y\,{\rm d}\sigma\notag\\
		\leq& 4\left(\Vert v_0\Vert_{L^2(-1,1)}+g^2_0+\mathcal{K}\left(h_1-h_0\right)^2+\Vert u\Vert^2_{L^2(0,\infty)}\right) \qquad \qquad (t\in[0,\tau)).
	\end{align}
	Moreover, using the fact $\vert h(\sigma)-h_1\vert\leq 2$, for every $t\in (0,\tau),\;\sigma \in [0,t]$, it follows that
	\begin{equation}\label{estimate_hu}
		\left\vert \int_{0}^{t}\mathcal{K}(h_1-h(\sigma)){\rm d}\sigma\right\vert \leq 2\mathcal{K}t\qquad \quad 	\left\vert \int_{0}^{t}u(\sigma){\rm d}\sigma\right\vert \leq t^{\frac12}\Vert u\Vert_{L^2(0,\infty)}.
	\end{equation}
	By combing  \eqref{estimate_P}, \eqref{estimate_of_A_1}, \eqref{estimate_A2}, \eqref{estimate_hu} and \eqref{equality_of_h}, we have
	\begin{align}\label{caculation_of_h}
		\ln \frac{1+h(t)}{1+h_0}-\ln \frac{1-h(t)}{1-h_0}\leq C+2\mathcal{K}t + t^{\frac12}\Vert u\Vert_{L^2(0,\infty)} \qquad \quad (t\in[0,\tau)),
	\end{align}
	where 
	\begin{align}\label{defintion_C}
	C=&10\left(\Vert v_0\Vert_{L^2(-1,1)}^2+g^2_0+\mathcal{K}(h_1-h_0)^2
	\right.\notag\\
&\left.	+ \Vert u\Vert^2_{L^2(0,\infty)}+\sqrt{\Vert v_0\Vert_{L^2(-1,1)}^2+g^2_0+\mathcal{K}(h_1-h_0)^2+ \Vert u\Vert^2_{L^2(0,\infty)}}\right).
	\end{align}
Since for every $t>0$, we have
\begin{equation*}
    \frac{1+h_0}{1-h_0}\exp\left(C+2\mathcal{K}t+ t^{\frac12}\Vert u\Vert_{L^2(0,\infty)}\right)\leq \max\left\{2,\frac{1+h_0}{1-h_0}\right\}\exp\left(C+2\mathcal{K}t+ t^{\frac12}\Vert u\Vert_{L^2(0,\infty)}\right),
\end{equation*}
	then after a simple calculation, the above inequality \eqref{caculation_of_h} can be rephrased to
	\begin{equation}\label{estimate_h_upper}
		h(t)\leq 1-\frac{2}{1+\max\left\{2,\frac{1+h_0}{1-h_0}\right\}\exp\left(C+2\mathcal{K}t+ t^{\frac12}\Vert u\Vert_{L^2(0,\infty)}\right)}\qquad\qquad (t\in[0,\tau)).
	\end{equation}
	Choosing next the test function $\begin{bmatrix}
		\psi\\
		l
	\end{bmatrix}=\begin{bmatrix}
		-\varphi\\
		1
	\end{bmatrix}$ in Defintion \ref{definition_of_finite_energy} and following step by step the procedure used to prove \eqref{estimate_h_upper}, it follows that:
	\begin{equation}\label{estimate_h_lower}
		h(t)\geq -1+\frac{2}{1+\max\left\{2,\frac{1-h_0}{1+h_0}\right\}\exp\left(C+2\mathcal{K}t+t^{\frac12}\Vert u\Vert_{L^2(0,\infty)}\right)}   \qquad\qquad (t\in[0,\tau)).
	\end{equation}

By combining  Corollary \ref{corollary_contraction} and Proposition \ref{local_existstheorem} , we finish the proof of Theorem \ref{global_exists}.
\end{proof}

\section{Proof of Theorem \ref{unforimboundtheorem}}\label{sectionuniformbound}

This section is devoted to the proof of one of our main results.

\begin{proof}[Proof of Theorem \ref{unforimboundtheorem}]
Let 
$\begin{bmatrix}
    v\\
    g\\
    h
\end{bmatrix}$
be the finite energy solution defined by \eqref{definition_of_finite_energy} to system \eqref{conversition_of_mom_of_solid_without_control}–\eqref{inlitial_date_of_solid_without_control}. Then the energy inequality \eqref{energy_equality1} holds for every $t \geq 0$.

From this inequality we know that
$$
\dot{h}=g\in L^\infty(0,\infty),
$$
together with the estimate \eqref{estimateh} and the Sobolev embedding theorem, we obtain
\[
h \in W^{1,\infty}(0,\infty) \hookrightarrow C[0,\infty),
\]
which implies that $h$ is almost continuous on $[0,\infty)$.

We now prove Theorem \ref{unforimboundtheorem}. Since global existence and uniqueness have already been established, it remains to prove the uniform bound \eqref{estimateh1} on the particle’s position $h(t)$.

By the continuity of $h$ and the local sign-preserving property of continuous functions, it suffices to verify that $h(t)$ stays uniformly away from the value 1. This can be done by considering three possible scenarios:

\begin{itemize}
    \item[(1)] For all $t > 0$,
    \[
    h(t) \leq  h_1.
    \]

    \item[(2)]  If there exists $T^* > 0$ such that
    \[
    h(T^*) = h_1 ,
    \]
    and for all $t > T^*$,
    \[
    h(t) >  h_1.
    \]
    Then there exists a constant $\alpha > 0$ such that
    \[
    h(t) \leq 1 - \alpha, \quad \forall t > 0.
    \]

    \item[(3)] If there exists a sequence $\{T_i^*\}_{i=1}^N$ ($N$ could be infinite) such that
    \[
    h(T_i^*) = h_1,
    \]
    and for each $t \in (T_i^*, T_{i+1}^*)$,
    \[
    h(t) > h_1.
    \]
    Then again, there exists a uniform constant $\alpha > 0$, independent of $i$, such that
    \[
    h(t) \leq 1 - \alpha, \quad \forall t \in (T_i^*, T_{i+1}^*).
    \]
\end{itemize}

\textbf{Case 1.} The bound is already satisfied, so there is nothing to prove.

\medskip

\textbf{Case 2.} From equation \eqref{equality_of_h}, we have:
\[
\begin{aligned}
\ln \frac{1 + h(t)}{1 + h_0} - \ln \frac{1 - h(t)}{1 - h_0}
&= \int_0^t \mathcal{K}(h_1 - h(\sigma)) \, d\sigma + \int_0^t u(\sigma) \, d\sigma \\
&\quad + P(0) - P(t) + \int_0^t A_1(\sigma) \, d\sigma - \int_0^t A_2(\sigma) \, d\sigma.
\end{aligned}
\]

Since $h(t) > h_1$ for $t > T^*$, we have $\mathcal{K}(h_1 - h(t)) \leq  0$ on $(T^*, \infty)$.

Then for $t > T^*$, we rewrite the above equation as:
\[
\begin{aligned}
\ln \frac{1 + h(t)}{1 + h_0} - \ln \frac{1 - h(t)}{1 - h_0}
=& \ln \frac{1 + h_1}{1 + h_0} - \ln \frac{1 - h_1}{1 - h_0} \\
&+ \int_{T^*}^t \mathcal{K}(h_1 - h(\sigma)) \, d\sigma + \int_{T^*}^t u(\sigma) \, d\sigma \\
&+ P(T^*) - P(t) + \int_{T^*}^t A_1(\sigma) \, d\sigma - \int_{T^*}^t A_2(\sigma) \, d\sigma.
\end{aligned}
\]

Since $\mathcal{K}(h_1 - h(\sigma)) \leq 0$, from the above equality, we get:
\[
\begin{aligned}
\ln \frac{1 + h(t)}{1 + h_0} - \ln \frac{1 - h(t)}{1 - h_0}
\leq& \ln \frac{1 + h_1}{1 + h_0} - \ln \frac{1 - h_1}{1 - h_0} \\
&+ \int_{T^*}^t u(\sigma) \, d\sigma + P(T^*) - P(t) \\
&+ \int_{T^*}^t A_1(\sigma) \, d\sigma - \int_{T^*}^t A_2(\sigma) \, d\sigma.
\end{aligned}
\]

By assumption $u\in L^1(0, \infty)$ and previous estimates \eqref{estimate_P}, \eqref{estimate_of_A_1}, \eqref{estimate_A2}, all terms on the right-hand side are bounded by $\|v_0\|_{L^2(-1,1)}$, $g_0$, $h_0$, $h_1$, $\mathcal{K}$, $\|u\|_{L^2(0,\infty)}$, and $\|u\|_{L^1(0,\infty)}$. Therefore,
\[
\ln \frac{1 + h(t)}{1 + h_0} - \ln \frac{1 - h(t)}{1 - h_0} \leq C,
\]
for some constant $C > 0$. This implies
\[
h(t) \leq 1 - \alpha,
\]
with
\begin{equation}\label{definitionofalpha}
    \alpha=\frac{2}{1+C}
\end{equation}
where
\begin{align}\label{defintion_alpha_C}
	C=&\frac{1+h_0}{1-h_0}\exp\left(\ln \frac{1 + h_1}{1 + h_0} - \ln \frac{1 - h_1}{1 - h_0}+10\left(\Vert v_0\Vert_{L^2(-1,1)}^2+g^2_0+\mathcal{K}(h_1-h_0)^2
	\right.\right.\notag\\
&\left.\left.	+ \Vert u\Vert^2_{L^2(0,\infty)}+\sqrt{\Vert v_0\Vert_{L^2(-1,1)}^2+g^2_0+\mathcal{K}(h_1-h_0)^2+ \Vert u\Vert^2_{L^2(0,\infty)}}\right)+\Vert u\Vert_{L^1(0,\infty)}\right).
	\end{align}
\medskip

\textbf{Case 3.} The same procedures as in Case 2 can be applied on each interval $(T_i^*, T_{i+1}^*)$, since $h(T_i^*) = \beta$ and $h(t) > \beta$ inside the interval. Therefore, the same bound holds on each sub-interval, with a uniform constant $\alpha > 0$ which has been defined in \eqref{definitionofalpha} and \eqref{defintion_alpha_C}.

\medskip

\noindent Combining the three cases, we conclude that there exists a constant $\alpha > 0$ such that
\begin{equation}\label{upperbound1}
    h(t) \leq 1 - \alpha, \quad \forall t > 0.
\end{equation}

Moreover, since $h$ is almost continuous, we can follow the same steps as in the proof of \eqref{upperbound1} to show that there exists a constant $\alpha$ such that the position trajectory $h$ of the system \eqref{conversition_of_mom_of_solid_without_control}--\eqref{inlitial_date_of_solid_without_control} satisfies the desired estimate.
\begin{equation}\label{lowerbound1}
    1 - \alpha \leq h(t), \quad \forall t > 0.
\end{equation}

Finally, combining \eqref{upperbound1}, \eqref{lowerbound1} and Theorem \ref{global_exists}, we complete the proof of Theorem \ref{unforimboundtheorem}.
\end{proof}

\section{Proof of Theorem \ref{theoreminputtostate}}\label{section_iss}

We first give the proof of inequality \eqref{input_states_stability1}.
\begin{proof}[Proof of Theorem \ref{theoreminputtostate}]
 Let $\varepsilon$ be such that
\begin{equation}\label{pripa_eps}
0  \leq  \varepsilon\leq \min\left\{\frac18, \frac{\mathcal{K}}{8}\right\}\qquad \qquad (\mathcal{K}>0),
\end{equation}
and  we define
	\begin{align}\label{Lyapunov_functions_of_interaction_system}
		V_\varepsilon(t)
		=&	\int_{-1}^{1}v^2(t,y)\,{\rm d}y+g^2(t)+\mathcal{K}\left(h(t)-h_1\right)^2\notag\\
		&-\varepsilon\left(h_1-h(t)\right) P(t) \qquad\qquad\qquad \left(t\geqslant 0\right),
	\end{align}
	where  $\begin{bmatrix}
		v\\
		g\\
		h
	\end{bmatrix}$ is the global finite energy solution of system \eqref{conversition_of_mom_of_solid_without_control}-\eqref{inlitial_date_of_solid_without_control}, which has been defined in Definition \ref{definition_of_finite_energy} and $P$ has been defined in \eqref{defintion_of_P}.

	Meanwhile, for every $t\geq 0$, we also define
	\begin{equation}\label{definition_of_energy}
		E(t)=\int_{-1}^{1}v^2(t,y)\,{\rm d}y+g^2(t)+\mathcal{K}\left(h(t)-h_1\right)^2\qquad (\mathcal{K}> 0).
	\end{equation}
	By combining  the Cauchy-Schwarz inequality and the fact $\vert\varphi\vert$ is uniform bounded by $1$, it is straightforward to verify that,
	\begin{equation}\label{norm_inequality2}
		\frac14 E(t)\leq V_{\varepsilon}(t)\leq 2E(t),
		\qquad
		\frac14 V_{\varepsilon}(t) \leq E(t)\leq 2 V_{\varepsilon}(t) \qquad\qquad (t\geq 0).
	\end{equation}
We next remark that $V_{\varepsilon}(t)$ is differentiable with respect to $t$.
Indeed, from the energy estimate given by \eqref{energy_equality} and Theorem \ref{global_exists}, we deduce that the mapping
\begin{equation}
	t \mapsto \frac{1}{2}\left(\int_{-1}^{1} v^2(t,y) \, \mathrm{d}y + g^2(t) + \mathcal{K}(h(t) - h_1)^2\right)
\end{equation}
is absolutely continuous and differentiable for almost every $t \in [0, \infty)$. Consequently, by taking the derivative of \eqref{definition_of_energy} with respect to $t$, we obtain
\begin{align}\label{energyestimatesection5}
	\frac{{\rm d}}{\rm d t}E(t)= -2\int_{-1}^1v_y^2(t,y) {\rm d } y+g(t)u(t) \qquad \qquad ( t\geq 0\;{\rm a.e.}).
\end{align}
On the other hand, from \eqref{equality_of_h} it follows that $P$ is absolutely continuous on every bounded interval contained in $[0,\infty)$,
thus differentiable almost everywhere on $[0,\infty)$. Moreover, from \eqref{defintion_of_P} it also follows that for almost every $t\geqslant 0$
we have
\begin{align}\label{derivative_expression}
	\frac{\mathrm{d}P}{\mathrm{d}t}(t)
	=& A_1(t) - A_2(t) - \int_{-1}^{1}v_y(t,y)\varphi_y(t,y) \, \mathrm{d}y + \mathcal{K}(h_1 - h(t))\notag\\
	=& A_1(t) - A_2(t) + \mathcal{K}(h_1 - h(t)) - \left(\frac{g(t)}{1+h(t)} + \frac{g(t)}{1-h(t)}\right)+u(t),
\end{align}
where $A_1$ and $A_2$ have been defined in \eqref{defintion_of_A1} and \eqref{defintion_of_A2}, respectively.

 From \eqref{energyestimatesection5} and \eqref{derivative_expression} we get that $V_\varepsilon(t)$ defined by \eqref{Lyapunov_functions_of_interaction_system} is absolutely continuous, thus differentiable for almost every $t \in [0, \infty)$, with
 \begin{align}\label{deofV}
 	\frac{\mathrm{d}}{\mathrm{d}t}V_\varepsilon(t)
 	=& -2\int_{-1}^{1} v_y^2(t,y) \, \mathrm{d}y
 	+g(t)u(t) + \varepsilon g(t) \left( g(t) + \int_{0}^{1} \varphi(t,y) v(t,y) \, \mathrm{d}y \right)  
 	 - \varepsilon \left( h_1 - h(t) \right) \notag\\
     &\left( A_1(t) - A_2(t) + \mathcal{K}(h_1 - h(t)) - \left( \frac{g(t)}{1 + h(t)} + \frac{g(t)}{1 - h(t)} \right) +u(t)\right).
 \end{align}
 By using Cauchy-Schwarz inequality, for every $\mathcal{K}>0$ we have 
 \begin{equation}
     g(t)u(t)\leq \frac12 g^2(t)+ \frac12 u^2(t),\qquad  (h_1-h(t))u(t)\leq \frac{\mathcal{K}}{4}(h_1-h(t))^2+\frac{1}{\mathcal{K}}u^2(t)\qquad (t\geq 0).
 \end{equation}
Moreover, we know from \eqref{estimateh} that there exists a constant $\alpha \in (0,1]$, such that
\begin{equation*}
	\min\{ 1-h(t), \;1+h(t)\}\geq \alpha\qquad (t\geq 0).
\end{equation*}
The above estimate, combined with an elementary  inequality, shows that for every $t\geq 0$ we have
\begin{equation}\label{estimateoffrac}
	\left(h_1-h(t)\right)\left( \frac{g(t)}{1 + h(t)} + \frac{g(t)}{1 - h(t)} \right)\leq \frac{g^2(t)}{\mathcal{K}\alpha^2}+\frac{\mathcal{K}}{4}(h_1-h(t))^2\qquad (\mathcal{K}>0).
\end{equation}
From the above inequality it follows that for every $\varepsilon$ satisfying \eqref{pripa_eps}
and almost every $t\geqslant 0$ and $\mathcal{K}>0$ we have
\begin{align*}
\frac{\mathrm{d}}{\mathrm{d}t}V_{\varepsilon}(t)
	\leq& \left(-\int_{-1}^{1}v_y^2(t,y)\,\mathrm{d}y - \varepsilon(h_1 - h(t))A_1(t)
+ \varepsilon(h_1 - h(t))A_2(t)\right)\\
&+\frac12 g^2(t)+ \left(5 + \frac{1}{\mathcal{K}\alpha^2}\right)\varepsilon g^2(t)
	- \frac{3\mathcal{K}\varepsilon}{4}(h_1 - h(t))^2+\left(\frac12+\frac{\varepsilon}{\mathcal{K}}\right)u^2(t).
\end{align*}

		Using the inequality above and the fact that $|h(t) - h_1| \leq 2$, along with the estimates of $A_1$ and $A_2$ in \eqref{estimateA1}, \eqref{estimate_A2}, respectively, we have
		\begin{align}\label{are_acuma}
		\frac{\mathrm{d}}{\mathrm{d}t}V_{\varepsilon}(t)
            \leq& \left(-2 + 20\varepsilon\right)\int_{-1}^{1}v_y^2(t,y)\,\mathrm{d}y +\frac12 g^2(t)+ \left(5 + \frac{1}{\mathcal{K}\alpha^2}\right)\varepsilon g^2(t) - \frac{3\mathcal{K}\varepsilon}{4}(h_1 - h(t))^2\notag\\
            &+\left(\frac12+\frac{\varepsilon}{\mathcal{K}}\right)u^2(t)
		\end{align}
Using the estimates
	\begin{align}\label{here_label}
		&g(t)=v(t,h(t))=\int_{-1}^{h(t)}v_y(t,y)\,{\rm d}y\leq \left(\int_{-1}^{1}v_y^2(t,y)\,{\rm d}y\right)^\frac{1}{2}\left(\int_{-1}^{1}1\,{\rm d}y\right)^\frac{1}{2}\notag\\
		\leq& \sqrt{2}\left(\int_{-1}^{1}v_y^2(t,y)\,{\rm d}y\right)^\frac{1}{2},
	\end{align}	
inequality \eqref{are_acuma} implies that for almost every $t\geqslant 0$ we have
\begin{align}
&\frac{\mathrm{d}}{\mathrm{d}t}V_{\varepsilon}(t) 
\leq \left(-1 + 20\varepsilon\right)\int_{-1}^{1}v_y^2(t,y)\,\mathrm{d}y + \left(5 + \frac{1}{\mathcal{K}\alpha^2}\right)\varepsilon g^2(t)\notag\\
&- \frac{3\mathcal{K}\varepsilon}{4}(h_1 - h(t))^2+\left(\frac12+\frac{\varepsilon}{\mathcal{K}}\right)u^2(t)\notag\\
=& \left(-1+ 20\varepsilon\right)\int_{-1}^{1}v_y^2(t,y)\,\mathrm{d}y + \left(7 + \frac{1}{\mathcal{K}\alpha^2}\right)\varepsilon g^2(t)-2\varepsilon g^2(t)\notag\\
&- \frac{3\mathcal{K}\varepsilon}{4}(h_1 - h(t))^2+\left(\frac12+\frac{\varepsilon}{\mathcal{K}}\right)u^2(t)\notag\\
\leq &\left(-1+20\varepsilon+2\left(7 + \frac{1}{\mathcal{K}\alpha^2}\right)\varepsilon\right)\int_{-1}^{1}v_y^2(t,y)\,{\rm d}y-2\varepsilon g^2(t)\notag\\
&- \frac{3\mathcal{K}\varepsilon}{4}(h_1 - h(t))^2+\left(\frac12+\frac{\varepsilon}{\mathcal{K}}\right)u^2(t).
\end{align}
		We choose $\varepsilon = \frac{1}{16\left(34 + \frac{2}{\mathcal{K}\alpha^2}\right)}$, which satisfies  \eqref{pripa_eps}. Combining this with the Poincar\'e inequality, we obtain
		\begin{align}
			\frac{\mathrm{d}}{\mathrm{d}t}V_{\varepsilon}(t) \leq& -\frac{1}{4}\left[1 - \left(34 + \frac{2}{\mathcal{K}\alpha^2}\right)\varepsilon\right]\int_{-1}^{1}v^2(t,y)\,\mathrm{d}y - 2\varepsilon g^2(t) \notag\\
            &- \frac{3\mathcal{K}\varepsilon}{4}(h_1 - h(t))^2+\left(\frac12+\frac{\varepsilon}{\mathcal{K}}\right)u^2(t),
		\end{align}
	since $\alpha\leq 1$, then we have
    $$\frac{\varepsilon}{\mathcal{K}}= \frac{1}{544\mathcal{K}+\frac{2}{\alpha^2}}\leq 1$$
		Denote
		\begin{equation}\label{definition_of_lambda}
			\eta = \frac{1}{4} \min\left\{ \frac{1}{34 + \frac{2}{\mathcal{K}\alpha^2}}, \; \frac{3\mathcal{K}\varepsilon}{4} \right\}.
		\end{equation}
		
		Using the above inequality and \eqref{norm_inequality2}, for almost every $t \geq 0$, we have
		\begin{equation}
			\frac{\mathrm{d}}{\mathrm{d}t}V_{\varepsilon}(t) \leq -4\eta E(t) +\frac32 u^2(t)\leq -\eta V_{\varepsilon}(t)+\frac32 u^2(t),
		\end{equation}
		where $E(t)$ is defined by \eqref{definition_of_energy}.
		
		By Gronwall's inequality, for $\mathcal{K} > 0$, we obtain
		\begin{equation}\label{use_end}
			V_{\varepsilon}(t) \leq \exp(-\eta t)V_{\varepsilon}(0)+\frac32 \Vert u\Vert^2_{L^2(0,\infty)}.
		\end{equation}
		
		Using \eqref{norm_inequality2} again, we conclude that if $\mathcal{K}>0$ then
		\begin{equation}\label{energy_decay_K_positive}
			E(t) \leq 16\exp(-\eta t)E(0) +\frac32\Vert u\Vert^2_{L^2(0,\infty)}\qquad\qquad (t \geqslant 0).
		\end{equation}

        Here, we finish the proof of inequality \eqref{input_states_stability1}.

    Next, for $\mathcal{K} = 0$, using \eqref{energyestimatesection5}, \eqref{here_label}, and the Cauchy--Schwarz inequality, we obtain
\begin{align}
    \frac{{\rm d}}{{\rm d}t}\left(\|v(t)\|^2_{L^2(-1,1)} + g^2(t)\right) 
    &\leq -2\int_{-1}^{1} |v_y(t,y)|^2\,{\rm d}y + \frac{1}{2}g^2(t) + \frac{1}{2}u^2(t) \notag \\
    &\leq -2\int_{-1}^{1} |v_y(t,y)|^2\,{\rm d}y + \frac{3}{4}g^2(t) - \frac{1}{4}g^2(t) + \frac{1}{2}u^2(t) \notag \\
    &\leq -\frac{1}{2}\int_{-1}^{1} |v_y(t,y)|^2\,{\rm d}y - \frac{1}{4}g^2(t) + \frac{1}{2}u^2(t).
\end{align}

Applying the Poincar\'e  inequality on $(-1,1)$, we have
\begin{equation}
    \int_{-1}^{1} |v(t,y)|^2\,{\rm d}y \leq C_P \int_{-1}^{1} |v_y(t,y)|^2\,{\rm d}y,
\end{equation}
where $C_P = \left(\frac{1}{\pi/2}\right)^2 = \frac{4}{\pi^2}$, see for Payne and Weinberger \cite{payne1960optimal}. For simplicity, we absorb the constant into the inequality and write
\begin{equation}
    \frac{{\rm d}}{{\rm d}t}\left(\|v(t,\cdot)\|^2_{L^2(-1,1)} + g^2(t)\right) 
    \leq -\frac{1}{4} \|v(t,\cdot)\|^2_{L^2(-1,1)} - \frac{1}{4}g^2(t) + \frac{1}{2}u^2(t).
\end{equation}

Now, applying Gronwall’s inequality, we deduce that
\begin{equation*}
    \|v(t,\cdot)\|^2_{L^2(-1,1)} + g^2(t)
    \leq \exp \left({-\frac{1}{4}t}\right)\left( \|v_0\|^2_{L^2(-1,1)} + g_0^2 \right) + \frac{1}{2} \int_0^t u^2(\sigma)\,{\rm d}\sigma.
\end{equation*}

Combining the estimate above with \eqref{energy_decay_K_positive}, we complete the proof of Theorem~\ref{theoreminputtostate}.
 \end{proof}  
     Finally, we introduce a standard local input-to-state stability for system \eqref{conversition_of_mom_of_solid_without_control}-\eqref{inlitial_date_of_solid_without_control}.
\begin{proposition}   
[local eISS]
Assume that $v_0 \in L^2(-1,1)$, $g_0 \in \mathbb{R}$, $h_0, h_1 \in (-1,1)$, and $u \in L^2(0,\infty)$ satisfy:
\begin{align}
\label{condition1}
&0 < |h_0 - h_1| < \frac{1}{2\sqrt{2}} \min\{1 - h_1, 1 + h_1\}, \\
\label{condition2}
&\mathcal{K} > \frac{\|v_0\|^2_{L^2(-1,1)} + g_0^2 + \|u\|^2_{L^2(0,\infty)}}{|h_0 - h_1|^2}.
\end{align}
Then system \eqref{conversition_of_mom_of_solid_without_control}--\eqref{inlitial_date_of_solid_without_control} is exponentially input-to-state stable. 
That is, there exist constants 

\noindent$C_1(h_1) > 0$ such that for all $t \geq 0$, we have
\begin{align}
\label{local input_states_stability3}
&\|v(t)\|^2_{L^2(-1,1)} + g^2(t) + \mathcal{K}(h(t) - h_1)^2  \notag\\
\leq& 16 e^{-C_1 t} \left( \|v_0\|^2_{L^2(-1,1)} + g_0^2 + \mathcal{K}(h_0 - h_1)^2 \right) + \frac32 \|u\|^2_{L^2(0,\infty)}.
\end{align}
\end{proposition}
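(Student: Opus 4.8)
The plan is to exploit the two quantitative hypotheses \eqref{condition1}--\eqref{condition2} to \emph{confine} the particle, once and for all, to a compact subinterval of $(-1,1)$ whose endpoints depend only on $h_1$, and then to rerun the Lyapunov computation from the proof of Theorem~\ref{theoreminputtostate} with this input-independent confinement. The only difference with Theorem~\ref{theoreminputtostate} is precisely that there the distance $\alpha$ from $h(t)$ to the boundary, and hence the exponential rate, was allowed to depend on $\|u\|_{L^2}$ and $\|u\|_{L^1}$; under \eqref{condition1}--\eqref{condition2} we shall be able to take $\alpha$ depending on $h_1$ only, which upgrades the estimate to a genuine exponential ISS bound.

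\textbf{Step 1 (confinement of $h$).} I would start from the global energy inequality \eqref{energy_equality1}, which holds for every $t\geq 0$ by the global well-posedness of Theorem~\ref{global_exists} (as already used in Section~\ref{sectionuniformbound}), and keep only the term $\mathcal{K}(h_1-h(t))^2$ on the left:
\[
\mathcal{K}(h_1-h(t))^2\leq \|v_0\|^2_{L^2(-1,1)}+g_0^2+\|u\|^2_{L^2(0,\infty)}+\mathcal{K}(h_0-h_1)^2\qquad(t\geq 0).
\]
Hypothesis \eqref{condition2} is exactly $\|v_0\|^2_{L^2(-1,1)}+g_0^2+\|u\|^2_{L^2(0,\infty)}<\mathcal{K}|h_0-h_1|^2$, so the right-hand side is strictly less than $2\mathcal{K}|h_0-h_1|^2$; dividing by $\mathcal{K}>0$ gives $|h(t)-h_1|<\sqrt2\,|h_0-h_1|$ for all $t\geq 0$. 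Combining this with \eqref{condition1} yields $|h(t)-h_1|<\tfrac12\min\{1-h_1,1+h_1\}$, whence, using $1\mp h_1\geq\min\{1-h_1,1+h_1\}$,
\[
\min\{1-h(t),\,1+h(t)\}\geq\alpha:=\tfrac12\min\{1-h_1,1+h_1\}>0\qquad(t\geq 0),
\]
with $\alpha$ a constant depending on $h_1$ alone.

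\textbf{Step 2 (Lyapunov estimate with the uniform $\alpha$).} Equipped with this lower bound, I would run the functional $V_\varepsilon$ of \eqref{Lyapunov_functions_of_interaction_system} word for word as in the proof of Theorem~\ref{theoreminputtostate}: the key inequality \eqref{estimateoffrac} holds with the present $\alpha$, then \eqref{are_acuma}, the choice $\varepsilon=\tfrac{1}{16(34+2/(\mathcal{K}\alpha^2))}$, the Poincaré inequality and Gronwall's lemma go through unchanged and produce $E(t)\leq 16e^{-\eta t}E(0)+\tfrac32\|u\|^2_{L^2(0,\infty)}$, cf.\ \eqref{energy_decay_K_positive}, with $\eta$ given by \eqref{definition_of_lambda}. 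Since $\alpha$ now depends only on $h_1$, the rate $\eta$ depends only on $h_1$ and $\mathcal{K}$ and is independent of the initial data and of the input; setting $C_1=\eta$ gives \eqref{local input_states_stability3}, which is an ISS estimate in the sense of Section~\ref{background_of_input} (with $\beta(r,t)=16e^{-C_1 t}r\in\mathcal{KL}$ and $\gamma(s)=\tfrac32 s\in\mathcal{K}_\infty$ after squaring the state norm).

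The only step that genuinely needs care is Step~1, i.e.\ checking that the thresholds in \eqref{condition1}--\eqref{condition2} are calibrated so that the global a priori energy bound forces $h(t)$ to stay within distance $\tfrac12\min\{1-h_1,1+h_1\}$ of $h_1$ for all time; this is a closed bootstrap here precisely because the energy inequality is global in time rather than only local. After that, Step~2 is a re-reading of the proof of Theorem~\ref{theoreminputtostate} in which every occurrence of $\alpha$ is now an explicit function of $h_1$, so no new estimate is required.
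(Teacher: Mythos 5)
Your proposal is correct and follows essentially the same route as the paper: the energy inequality \eqref{energy_equality1} together with \eqref{condition2} gives $\mathcal{K}(h(t)-h_1)^2\leq 2\mathcal{K}|h_0-h_1|^2$, hence via \eqref{condition1} the uniform bound $\min\{1-h(t),1+h(t)\}\geq\frac12\min\{1-h_1,1+h_1\}$, after which the Lyapunov argument of Theorem~\ref{theoreminputtostate} is rerun with this input-independent $\alpha$. This is exactly the paper's proof, with your Step 1 spelling out the confinement slightly more explicitly.
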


\begin{proof}
From the energy estimate and assumption \eqref{condition2}, we have:
\[
\int_{-1}^{1} v^2(t,y)\, \mathrm{d}y + g^2(t) + \mathcal{K}(h(t) - h_1)^2 \leq 2\mathcal{K}|h_0 - h_1|^2.
\]
Then,
\[
|h(t) - h_1| \leq \sqrt{2} |h_0 - h_1| < \frac{1}{2} \min(1 - h_1, 1 + h_1),
\]
which implies
\[
\min(1 - h(t), 1 + h(t)) \geq \frac{1}{2} \min(1 - h_1, 1 + h_1).
\]
Taking $\alpha = \frac{1}{2} \min(1 - h_1, 1 + h_1)$, we can apply the same analysis from \eqref{deofV}-\eqref{energy_decay_K_positive} to get \eqref{local input_states_stability3}. Then combined the estimate \eqref{local input_states_stability3} with the definition of ISS in Section \ref{background_of_input}, we get the local ISS property of system \eqref{conversition_of_mom_of_solid_without_control}-\eqref{inlitial_date_of_solid_without_control}.
\end{proof}

{\bf Acknowledgments.}
This work is Funded by the European Union (Horizon Europe MSCA project Modconflex, grant number 101073558).

\bibliographystyle{siam}
\bibliography{references}

\end{document}